\numberwithin{equation}{section}
\newtheorem{theorem}{Theorem}
\newtheorem{proposition}[theorem]{Proposition}
\newtheorem{lemma}[theorem]{Lemma}
\newtheorem{corollary}[theorem]{Corollary}
\theoremstyle{remark}
\newtheorem{remarknu}[theorem]{Remark}
\newtheorem*{note}{Note}
\def\al{\alpha}
\def\be{\beta}
\def\de{\delta}
\def\ga{\gamma}
\def\ep{\varepsilon}
\def\Om{\Omega}
\def\Z{{\mathbb Z}}
\def\coef#1{\left\langle#1\right\rangle}
\def\Pol{\operatorname{Pol}}
\def\fl#1{\left\lfloor#1\right\rfloor}
\begin{document}
\title[Motzkin numbers modulo powers of $2$]%
{Motzkin numbers and related sequences modulo powers of $2$}
\author[C. Krattenthaler and 
T.\,W. M\"uller]{C. Krattenthaler$^{\dagger}$ and
T. W. M\"uller$^*$} 

\address{$^{\dagger*}$Fakult\"at f\"ur Mathematik, Universit\"at Wien,
Oskar-Morgenstern-Platz~1, A-1090 Vienna, Austria.
WWW: {\tt http://www.mat.univie.ac.at/\lower0.5ex\hbox{\~{}}kratt}.}

\address{$^*$School of Mathematical Sciences, Queen Mary
\& Westfield College, University of London,
Mile End Road, London E1 4NS, United Kingdom.
}

\thanks{$^\dagger$Research partially supported by the Austrian
Science Foundation FWF, grants Z130-N13 and S50-N15,
the latter in the framework of the Special Research Program
``Algorithmic and Enumerative Combinatorics"\newline\indent
$^*$Research supported by Lise Meitner Grant M1661-N25 of the Austrian
Science Foundation FWF}

\subjclass[2000]{Primary 05A15;
Secondary 05E99 11A07 68W30}

\keywords{Motzkin numbers, Motzkin prefix numbers, Riordan numbers,
hex tree numbers, central trinomial coefficients, congruences, Lambert series}

\begin{abstract}
We show that the generating function
$\sum_{n\ge0}M_n\,z^n$ for Motzkin numbers $M_n$, when
coefficients are reduced modulo a given power of~$2$, can be
expressed as a polynomial in the basic series
$\sum _{e\ge0} ^{}  {z^{4^e}}/( {1-z^{2\cdot 4^e}})$ with
coefficients being Laurent polynomials in $z$ and $1-z$.
We use this result to determine $M_n$ modulo~$8$ in terms
of the binary digits of~$n$, thus improving, respectively
complementing earlier results by Eu, Liu and Yeh
[{\it Europ.\ J. Combin.} {\bf 29} (2008), 1449--1466]
and by Rowland and Yassawi 
[{\it J.~Th\'eorie Nombres Bordeaux} {\bf 27} (2015), 245--288]. 
Analogous results are also shown to hold for related
combinatorial sequences, namely for the Motzkin prefix numbers,
Riordan numbers, central trinomial coefficients, and for the sequence of
hex tree numbers. 
\end{abstract}
\maketitle

\section{Introduction}
The {\it Motzkin numbers} $M_n$ may be defined by 
(cf.\ \cite[Ex.~6.37]{StanBI})
\begin{equation} \label{eq:Motzkin} 
\sum_{n\ge0}M_n\,z^n=\frac {1-z-\sqrt{1-2z-3z^2}} {2z^2}.
\end{equation}
They have numerous combinatorial interpretations, see \cite[Ex.~6.38]{StanBI}.
The most basic one says that $M_n$ equals the number of
lattice paths from $(0,0)$ to $(n,0)$ consisting of steps taken from
the set
$\{(1,0),(1,1),(1,-1)\}$ never running below the $x$-axis.

Deutsch and Sagan \cite[Theorem~3.1]{DeSaAA} determined the parity of $M_n$.
Furthermore, they conjectured \cite[Conj.~5.5]{DeSaAA} (a conjecture
which is in part also due to Amdeberhan)
that $M_n$ is divisible by $4$ if, and only if,
$$
n=(4i + 1)4^{j+1} - 1\quad \text{or}\quad  n = (4i + 3)4^{j+1} - 2,
$$
for some non-negative integers $i$ and $j$.
This conjecture prompted Eu, Liu and Yeh \cite{EuLYAB} to embark
on a more systematic study of Motzkin numbers modulo~$4$
and~$8$. They approached this problem by carefully analysing
a binomial sum representation for $M_n$ modulo~$4$ and ~$8$.
They succeeded in establishing the above conjecture
of Amdeberhan, Deutsch and Sagan. Moreover, they were able to characterise
the even congruence classes of $M_n$ modulo~$8$; see 
\cite[Theorem~5.5]{EuLYAB}
and Corollary~\ref{thm:EuLYAB} at the end of the present paper.
One outcome of this characterisation is that $M_n$ is never divisible
by~$8$, thus proving another conjecture of Deutsch and Sagan 
\cite[Conj.~5.5]{DeSaAA}.

As part of a general theory of diagonals of rational functions,
Rowland and Yassawi \cite[Sec.~3.2]{RoYaAA} are able to approach the problem
of determination of Motzkin numbers modulo powers of~$2$ from a
completely different point of view. Their general theory (and
algorithms) produce automata which output the congruence class of
diagonal coefficients of a given rational function modulo a 
given power of a prime number.
Since the Motzkin numbers $M_n$
can be represented as the diagonal coefficients of a certain
rational function, this theory applies, and consequently,
for any prime power~$p^\alpha$, an automaton can be produced which outputs
the congruence class of $M_n$ modulo~$p^\alpha$.
Rowland and Yassawi present the automaton for the determination of
$M_n$ modulo~$8$ in \cite[Fig.~4]{RoYaAA}.

The main purpose of this paper is to present yet another approach
to the determination of Motzkin numbers $M_n$ modulo powers of~$2$,
namely by the use of generating functions. Our approach is in line
with our earlier studies \cite{KaKMAA,KrMuAE,KrMuAL}, in which we
developed a generating function method for determining the
mod-$p^\alpha$ behaviour of sequences whose generating functions
satisfy algebraic differential equations. Application of this
method to a concrete class of problems starts with the choice
of a (usually transcendental) basic series. 
The goal then is
to show that the generating function of the sequence that we are
interested in can be expressed as a polynomial in this basic series
when the terms of the sequence are reduced modulo~$p^\alpha$.
Once this is achieved, one tries to extract a characterisation of the
congruence behaviour of the sequence under consideration modulo~$p^\alpha$ 
from this polynomial representation of the generating function.
However, the details --- such as the choice of basic series, or the
extraction of coefficients from its powers --- have to be adjusted
for each individual class of applications.

Here, we choose the series $\Om(z)$ defined by
\begin{equation} \label{eq:Omdef} 
\Om(z)=\sum _{e\ge0} ^{}\sum _{f\ge0} ^{}
z^{4^e(2f+1)}
=\sum _{e\ge0} ^{} \frac {z^{4^e}} {1-z^{2\cdot 4^e}}
\end{equation}
as basic series.
In plain words, the series $\Om(z)$ is the sum of all monomials $z^m$,
where $m$ runs through all positive integers with an even number of
zeroes to the far right of their binary representation.

Our first main result, Theorem~\ref{thm:Motzkin}, states that the
generating function $\sum_{n\ge0}M_n\,z^n$ 
for the Motzkin numbers, when coefficients are reduced modulo a given
power of~$2$, can be expressed as a polynomial in $\Om(z^4)$ 
with coefficients that are Laurent polynomials in $z$ and $1-z$. 

Theorem~\ref{thm:Motzkin} may be 
implemented, so that these polynomial expressions can be found
automatically modulo any given power of $2$. Thus, our result in
principle opens up the possibility of deriving congruences modulo
arbitrary $2$-powers for the sequence of Motzkin numbers, provided one
is able to systematically extract coefficients from powers of the
series $\Om(z^4)$. However, for exponents greater than $3$, this task
turns out to be difficult and rather involved, though possible in
principle. For this reason, we proceed differently. We use our
algorithm to obtain a congruence modulo $16$ exhibiting the generating
function of the Motzkin numbers as a polynomial of degree $7$ in
$\Om(z^4)$, and then drastically simplify the resulting expression
modulo $2^3=8$ at the expense of  introducing a certain
``error-series'' $E(z)$. The advantage is that, in this step, the
degree of $\Om(z^4)$ drops from $7$ to $1$. Extracting 
coefficients of powers of~$z$ from the resulting expression, we are then able
to go beyond the result \cite[Theorem~5.5]{EuLYAB}
of Eu, Liu and Yeh by providing a formula for the congruence class
of $M_n$ also in the case that $M_n$ is odd; see
Theorem~\ref{thm:M8}. As demonstrated in Corollary~\ref{cor:Mn1},
this result may conveniently be used to precisely characterise those~$n$ 
for which $M_n$ lies in a specified congruence class modulo~$8$.

In a sense, our generating function method and the automaton method
of \cite{RoYaAA} are complementary to each other. It is through
our method that we were led to the explicit characterisation
of the congruence behaviour of Motzkin numbers modulo~8 in
Theorem~\ref{thm:M8} and Corollary~\ref{cor:Mn1}, and to
the explicit characterisations of the congruence behaviour 
modulo~8 of Motzkin prefix numbers, of Riordan numbers,
of hex tree numbers, and of central trinomial coefficients 
in Theorems~\ref{thm:MP8}--\ref{thm:T8}, together with proofs
in each case.
It seems hard to {\it extract\/} these results from the automata
that one obtains by the theory in \cite{RoYaAA}. On the other hand,
as pointed out by a referee, once these results are {\it known},
they can be {\it verified\/} using the theory of automata on the basis of the
automata that one finds by means of the automaton theory of \cite{RoYaAA},
since the functions $s_2(n)$, $s_2^2(n)$, $e_2(n)$, $n_4$, $n_{K+1}$
that appear in these results are $2$-regular sequences.

\medskip
Our paper is organised as follows.
In the next section, we derive some properties of our basic series
$\Om(z)$ defined in \eqref{eq:Omdef} that we shall need in the
sequel. Section~\ref{sec:method} outlines our generating function
approach based on the series $\Om(z)$. 
Subsequently, in Section~\ref{sec:Motzkin}, we apply this
method to the generating function $M(z)=\sum_{n\ge0}M_n\,z^n$
for Motzkin numbers. This section contains our first main result,
Theorem~\ref{thm:Motzkin}, stating that $M(z)$, when coefficients
are reduced modulo a given power of~$2$, can be expressed as a
polynomial in $\Om(z)$. In Section~\ref{sec:M8}, we specialise
this result to the modulus $2^4=16$, see \eqref{eq:M16}.
This expression is then reduced modulo~$8$ and further simplified,
at the cost of introducing an ``error series" $E(z)$, defined
in \eqref{eq:Edef}. The final expression for the mod-$8$
reduction of $M(z)$ is presented in \eqref{eq:MM}.
The remaining task is to extract the coefficient of $z^n$ from
the right-hand side of \eqref{eq:MM}. In order to accomplish this,
we provide several auxiliary lemmas in Section~\ref{sec:aux}.
In Section~\ref{sec:Mn8}, making use of these lemmas, we are then able
to provide explicit formulae 
for the congruence class of $M_n$ modulo~$8$ in terms of the
binary digits of~$n$. This leads to our second main result which
is presented in Theorem~\ref{thm:M8}. In Corollary~\ref{cor:Mn1},
we demonstrate by means of an example how to characterise those~$n$
for which $M_n$ lies in a given congruence class modulo~$8$.
Our final section then collects together results analogous to
Theorem~\ref{thm:M8} for sequences related to Motzkin numbers,
namely for the Motzkin prefix numbers, Riordan numbers, hex tree
numbers, and the central trinomial coefficients; see
Theorems~\ref{thm:MP8}--\ref{thm:T8}. These results generalise 
earlier ones of Deutsch and Sagan \cite{DeSaAA} from modulus~$2$ to
modulus~$8$. 

\begin{note}
This paper is accompanied by a {\sl Mathematica} file
and a {\sl Mathematica} notebook so that an interested reader is
able to redo (most of) the computations that are presented in
this article. File and notebook are available at the article's
website
{\tt http://www.mat.univie.ac.at/\lower0.5ex\hbox{\~{}}kratt/artikel/2motzkin.html}.
\end{note}

\section{The power series $\Om(z)$}
\label{sec:Om}

Here we consider the formal power series $\Om(z)$ defined in
\eqref{eq:Omdef}.
This series is the principal character in the method for determining
congruences of recursive sequences modulo $2$-powers
that we describe in Section~\ref{sec:method}. 
This series is transcendental over
$\mathbb Z[z]$.\footnote{The quickest argument uses coefficient
  asymptotics for algebraic functions. The coefficients of $\Om(z)$
do not approach any limit. Since these coefficients are $0$ or $1$,
there is no asymptotic formula for them.
On the other hand, by \cite[Theorem~VII.8]{FlSeAA}, the coefficients
of algebraic power series {\it do} have asymptotic formulae (whose form
can even be described very specifically). Consequently, the series
$\Om(z)$ cannot be algebraic.} However, if the
coefficients of $\Om(z)$ are considered modulo a $2$-power $2^\ga$,
then $\Om(z)$ obeys a polynomial relation with coefficients that are
Laurent polynomials in $z$ and $1-z$. This is shown in
Proposition~\ref{prop:minpol} below. In the second part of this
section, we show that the derivative of $\Om(z)$, when coefficients
are reduced modulo a given $2$-power, can be expressed as a 
polynomial in $z$ and $(1-z)^{-1}$.
This is one of the crucial facts which
make the method described in Section~\ref{sec:method} work.

\medskip
Here and in the sequel,
given integral power series (or Laurent series) $f(z)$ and $g(z)$,
we write 
$$f(z)=g(z)~\text {modulo}~2^\ga$$ 
to mean that the coefficients
of $z^i$ in $f(z)$ and $g(z)$ agree modulo~$2^\ga$ for all $i$.



\begin{proposition} \label{prop:minpol}
We have
\begin{equation} \label{eq:Om^2A}
\Om^2(z)+\Om(z)-\frac {z} {1-z}=0\quad 
\text {\em modulo }2.
\end{equation}
\end{proposition}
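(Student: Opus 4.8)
The plan is to work directly with the defining expansion $\Om(z)=\sum_{e\ge0}\sum_{f\ge0}z^{4^e(2f+1)}$ and compute $\Om^2(z)$ modulo~$2$. Squaring a power series with $\{0,1\}$ coefficients modulo~$2$ kills all the cross terms and replaces each exponent by its double: $\Om^2(z)=\Om(z^2)$ modulo~$2$. Indeed, from the double sum one gets $\Om^2(z)=\sum_{e\ge0}\sum_{f\ge0}z^{2\cdot4^e(2f+1)}$ modulo~$2$. Thus the whole proposition reduces to showing that $\Om(z^2)+\Om(z)=\dfrac{z}{1-z}$ modulo~$2$, i.e.\ that the exponent sets of $\Om(z^2)$ and $\Om(z)$ together enumerate, each with multiplicity one modulo~$2$, \emph{every} positive integer (since $\frac{z}{1-z}=\sum_{m\ge1}z^m$).

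The key combinatorial step is the characterisation of $\Om$ recalled in the text: $\Om(z)=\sum_{m}z^m$ where $m$ ranges over positive integers whose binary representation ends in an \emph{even} number of zeroes (equivalently, writing $m=4^e(2f+1)$ uniquely, the $4$-adic valuation part is captured by $e$ and the trailing-zero count $2e$ is even). Then $\Om(z^2)=\sum z^{2m}$ runs over $2m=2\cdot4^e(2f+1)$, i.e.\ over integers whose binary representation ends in an \emph{odd} number of zeroes. Every positive integer $N$ can be written uniquely as $2^v\cdot(\text{odd})$, and $v$ is either even or odd; so the positive integers ending in an even number of zeroes and those ending in an odd number of zeroes partition the positive integers. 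Hence $\Om(z)+\Om(z^2)=\sum_{N\ge1}z^N=\frac{z}{1-z}$ as formal power series (not merely modulo~$2$), and a fortiori modulo~$2$. Combining this with $\Om^2(z)=\Om(z^2)$ modulo~$2$ gives $\Om^2(z)+\Om(z)=\frac{z}{1-z}$ modulo~$2$, which is \eqref{eq:Om^2A}.

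Alternatively, and perhaps cleaner for a formal-power-series write-up, one can avoid the verbal characterisation entirely and manipulate the closed form $\Om(z)=\sum_{e\ge0}\frac{z^{4^e}}{1-z^{2\cdot4^e}}$ directly. Over $\mathbb{F}_2[[z]]$ one has the Frobenius identity $\left(\frac{z^{4^e}}{1-z^{2\cdot4^e}}\right)^2=\frac{z^{2\cdot4^e}}{1-z^{4\cdot4^e}}=\frac{z^{2\cdot4^e}}{1-z^{2\cdot4^{e+1}}}$, and termwise squaring is legitimate modulo~$2$ because squaring is additive there; summing over $e\ge0$ telescopes the exponents of the auxiliary summand $\frac{z^{2\cdot4^e}}{1-z^{2\cdot4^e}}$ against $\frac{z^{4^e}}{1-z^{2\cdot4^e}}$, leaving only $\frac{z}{1-z^2}+\frac{z^2}{1-z^2}=\frac{z}{1-z}$. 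Either route is short; the only point requiring a modicum of care is justifying that termwise squaring of the (infinite) sum is valid modulo~$2$, which follows because each coefficient of $z^n$ in $\Om(z)$ is a finite sum, so reduction mod~$2$ and the Frobenius endomorphism commute with the (coefficientwise-finite) summation. I do not anticipate a genuine obstacle here; the ``hard part'' is merely to state the bijection between residues of binary expansions cleanly, which the text has already done for us.
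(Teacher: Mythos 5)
Your first argument is correct and is essentially the paper's own proof: squaring $\Om(z)$ modulo~$2$ kills the cross terms and yields the sum of $z^m$ over integers $m$ with an odd number of trailing binary zeroes, which together with the exponents of $\Om(z)$ itself enumerate every positive integer exactly once, giving $z/(1-z)$. (In your optional closed-form alternative there is a small exponent slip --- $(1-z^{2\cdot 4^e})^2\equiv 1-z^{4^{e+1}}$ modulo~$2$, not $1-z^{2\cdot 4^{e+1}}$ --- but the telescoping goes through after correcting it, via the exact identity $\frac{z^{4^e}}{1-z^{2\cdot 4^e}}+\frac{z^{2\cdot 4^e}}{1-z^{4^{e+1}}}=\frac{z^{4^e}}{1-z^{4^e}}-\frac{z^{4^{e+1}}}{1-z^{4^{e+1}}}$.)
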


\begin{proof}
It is straightforward to see that
$$
\Om^2(z)=\sum _{e\ge0} ^{}\sum _{f\ge0} ^{}
z^{2\cdot 4^e(2f+1)}\quad 
\text{modulo }2.
$$
In plain words, this is the sum of all monomials $z^m$,
where $m$ runs through all positive integers with an odd number of
zeroes to the far right of their binary representation. Consequently,
we have 
$$
\Om^2(z)+\Om(z)=\sum_{m\ge1}z^m=\frac {z} {1-z}\quad 
\text{modulo }2.
$$
This is equivalent to the claim.
\end{proof}


\begin{remarknu}
Clearly, Proposition~\ref{prop:minpol} implies that $\Om(z)$
is algebraic modulo {\it any} power of~$2$. In particular, we have
\begin{equation} \label{eq:N} 
\left(\Om^2(z)+\Om(z)-\frac {z} {1-z}\right)^N=0\quad 
\text {modulo }2^N,
\end{equation}
a relation that we shall use later on.
\end{remarknu}

We now turn our attention to the derivative of the basic series
$\Om(z)$.
By straightforward differentiation, we obtain
\begin{equation} \label{eq:Om'} 
\Om'(z)
= \sum_{e\ge0}^{}4 ^e \left(\frac {
z^{4^e-1}} {1-z^{2\cdot 4^e}}
+
\frac {2
z^{3\cdot 4^e-1}} {(1-z^{2\cdot 4^e})^2}\right).
\end{equation}
We would like to prove that $\Om'(z)$, when considered modulo a given
power of~$2$, can be expressed as a polynomial in $z$ and $(1-z)^{-1}$
with integer coefficients. The above relation does not quite achieve this,
since the denominators on the
right-hand side are not powers of $1-z$. However, the following is
true.

\begin{lemma} \label{lem:1+z}
For all non-negative integers $j$ and positive integers $\al$ and
$\be,$ we have
$$
\frac {1} {(1+z^{2^j})^\al}=\Pol^+_{j,\al,\be}\left(z,(1-z)^{-1}\right)
\quad \text {\em modulo }2^\be,
$$
where $\Pol^+_{j,\al,\be}\left(z,(1-z)^{-1}\right)$ is a polynomial in $z$ and
$(1-z)^{-1}$ with integer coefficients.
\end{lemma}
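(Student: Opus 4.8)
The plan is to reduce the whole statement to a single geometric-series expansion, exploiting the fact that modulo~$2$ the polynomial $1+z^{2^j}$ is nothing but $(1-z)^{2^j}$. First I would record the congruence $(1-z)^{2^j}\equiv 1+z^{2^j}$ modulo~$2$, which is the Frobenius identity (``freshman's dream'') in characteristic~$2$. Consequently there is a polynomial $g(z)\in\mathbb Z[z]$ with
$$1+z^{2^j}=(1-z)^{2^j}+2\,g(z),$$
and comparing constant terms on both sides forces $g(0)=0$.

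Next I would write
$$\frac{1}{1+z^{2^j}}=\frac{1}{(1-z)^{2^j}}\cdot\frac{1}{1+2g(z)(1-z)^{-2^j}}=\sum_{i\ge0}(-2)^i\,g(z)^i\,(1-z)^{-2^j(i+1)},$$
expanding the middle factor as a geometric series. Because $g(0)=0$, the series $2g(z)(1-z)^{-2^j}$ has zero constant term, so this is a legitimate identity in $\mathbb Z[[z]]$, the $i$-th summand being divisible by $z^i$. Reducing modulo~$2^\be$ annihilates every term with $i\ge\be$, leaving the \emph{finite} sum $\sum_{i=0}^{\be-1}(-2)^i\,g(z)^i\,(1-z)^{-2^j(i+1)}$, which is manifestly a polynomial in $z$ and $(1-z)^{-1}$ with integer coefficients. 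Raising this to the $\al$-th power (and using that $x\equiv y$ implies $x^\al\equiv y^\al$) disposes of general~$\al$, and one lets $\Pol^+_{j,\al,\be}$ be the resulting polynomial.

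The computation is otherwise entirely routine, and with this approach no induction on~$j$ is needed. The one point that requires attention is the legitimacy of the geometric expansion: one must verify that $2g(z)(1-z)^{-2^j}$ has vanishing constant term, which is precisely why it matters to observe $g(0)=0$ rather than merely the existence of~$g$. (An alternative argument by induction on~$j$, based on the factorisation $1-z^{2^{j+1}}=(1-z^{2^j})(1+z^{2^j})$, also works but is somewhat longer.)
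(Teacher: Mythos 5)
Your proof is correct, but it follows a genuinely different route from the one in the paper. The paper proceeds by induction (effectively on $j+\be$): the base case $j=0$ is handled by writing $1+z=(1-z)+2z$ and expanding $(1-z+2z)^{-\al}$ binomially, and the induction step relates $(1+z^{2^j})^{-\al}$ to $(1+z^{2^{j-1}})^{-2\al}$ through a somewhat involved identity producing terms carrying factors $2^\ell$, to which the induction hypothesis (with smaller modulus $2^{\be-\ell}$) is applied. You instead invoke the characteristic-two Frobenius congruence to write $1+z^{2^j}=(1-z)^{2^j}+2g(z)$ with $g(0)=0$, expand a single geometric series in $2g(z)(1-z)^{-2^j}$, truncate modulo $2^\be$, and dispose of general $\al$ by raising to the $\al$-th power; your justification of the expansion (the summand being divisible by $z^i$, so the identity holds in $\Z[[z]]$) is sound, and one could even bypass the condition $g(0)=0$ by checking directly that the truncated sum times $1+2g(z)(1-z)^{-2^j}$ is $1$ modulo $2^\be$. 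In effect, the paper's base case $j=0$ is exactly your argument with $g(z)=z$, and your observation that all interior binomial coefficients $\binom{2^j}{k}$ are even lets you treat every $j$ uniformly, eliminating the induction and yielding a more explicit polynomial $\Pol^+_{j,\al,\be}$; the paper's inductive scheme, on the other hand, is of the same shape as its proof of the companion Lemma~\ref{lem:1-z} and fits the iterative spirit of the method in Section~\ref{sec:method}, but is longer and less explicit. Both proofs establish exactly the stated lemma.
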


\begin{proof} We perform an induction with respect to $j+\be$.
For $j=0$ and arbitrary $\beta$, we have
$$
\frac {1} {(1+z)^\al}=
\frac {1} {(1-z+2z)^\al}=
(1-z)^{-\al}\sum_{k\ge0}\binom {\al+k-1}k 2^k\frac {z^k} {(1-z)^k}.
$$
When this is considered modulo~$2^\be$ for some fixed~$\be$,
the sum on the right-hand side becomes finite and is indeed 
a polynomial in $z$ and $(1-z)^{-1}$.

For the induction step with $j\ge1$, we write
\begin{align*}
\frac {1} {(1+z^{2^j})^\al}&=
\frac {1} {(1+z^{2^{j-1}})^{2\al}}
+\frac {1} {(1+z^{2^j})^\al}
-\frac {1} {(1+z^{2^{j-1}})^{2\al}}\\
&=
\frac {1} {(1+z^{2^{j-1}})^{2\al}}
+\frac {1} {(1+z^{2^{j-1}})^{2\al}(1+z^{2^j})^\al}
\left((1+z^{2^{j-1}})^{2\al}-(1+z^{2^j})^\al\right)\\
&=
\frac {1} {(1+z^{2^{j-1}})^{2\al}}
+\frac {1} {(1+z^{2^{j-1}})^{2\al}(1+z^{2^j})^\al}\\
&\kern4cm
\times
\left((1+2z^{2^{j-1}}+z^{2^{j}})^{\al}
-(1+z^{2^j})^\al\right)\\
&=
\frac {1} {(1+z^{2^{j-1}})^{2\al}}
+\frac {1} {(1+z^{2^{j-1}})^{2\al}(1+z^{2^j})^\al}\\
&\kern4cm
\times
\sum _{\ell=1} ^{\al}\binom\al\ell 
2^\ell\left(z^{2^{j-1}}\right)^{\ell}
\left(1+z^{2^j}\right)^{\al-\ell}\\
&=
\frac {1} {(1+z^{2^{j-1}})^{2\al}}
+
\sum _{\ell=1} ^{\al}\binom\al\ell 
\frac {2^\ell z^{\ell \cdot 2^{j-1}}} 
{(1+z^{2^{j-1}})^{2\al}(1+z^{2^j})^\ell}.
\end{align*}
Now the induction hypothesis can be applied, and yields
\begin{multline*}
\frac {1} {(1+z^{2^j})^\al}=
\Pol^+_{j-1,2\al,\be}\left(z,(1-z)^{-1}\right)\\
+
\sum _{\ell=1} ^{\al}\binom\al\ell 
{2^\ell z^{\ell \cdot 2^{j-1}}} 
(\Pol^+)^{2\al}_{j-1,2\al,\be-\ell}\left(z,(1-z)^{-1}\right)
(\Pol^+)^{\ell}_{j,\ell,\be-\ell}\left(z,(1-z)^{-1}\right)\\
\text {modulo }2^\be.
\end{multline*}
This completes the induction.
\end{proof}

\begin{lemma} \label{lem:1-z}
For all non-negative integers $j$ and positive integers $\al$ and
$\be,$ we have
$$
\frac {1} {(1-z^{2^j})^\al}=\Pol_{j,\al,\be}\left(z,(1-z)^{-1}\right)
\quad \text {\em modulo }2^\be,
$$
where $\Pol_{j,\al,\be}\left(z,(1-z)^{-1}\right)$ is a polynomial in $z$ and
$(1-z)^{-1}$ with integer coefficients.
\end{lemma}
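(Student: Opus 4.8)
The plan is to reduce Lemma~\ref{lem:1-z} to the already-proven Lemma~\ref{lem:1+z} by means of the factorisation
$$
1-z^{2^{j}}=(1-z^{2^{j-1}})(1+z^{2^{j-1}}).
$$
Iterating this identity down to the base $j=0$ gives
$$
1-z^{2^{j}}=(1-z)\prod_{i=0}^{j-1}(1+z^{2^{i}}),
$$
and hence, raising to the power $\al$,
$$
\frac{1}{(1-z^{2^{j}})^{\al}}=\frac{1}{(1-z)^{\al}}\prod_{i=0}^{j-1}\frac{1}{(1+z^{2^{i}})^{\al}}.
$$
Each factor $1/(1-z)^{\al}$ is manifestly a polynomial in $(1-z)^{-1}$, and each factor $1/(1+z^{2^{i}})^{\al}$ is, modulo $2^{\be}$, a polynomial $\Pol^{+}_{i,\al,\be}(z,(1-z)^{-1})$ in $z$ and $(1-z)^{-1}$ with integer coefficients by Lemma~\ref{lem:1+z}. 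A finite product of such polynomials is again a polynomial of the same kind, so we may set
$$
\Pol_{j,\al,\be}\left(z,(1-z)^{-1}\right)
:=(1-z)^{-\al}\prod_{i=0}^{j-1}\Pol^{+}_{i,\al,\be}\left(z,(1-z)^{-1}\right),
$$
which establishes the claim. The case $j=0$ is the trivial statement that $(1-z)^{-\al}$ is a polynomial in $(1-z)^{-1}$, and the empty product convention makes the displayed formula consistent with it.

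There is essentially no obstacle here: the only point requiring a word of care is that congruence modulo $2^{\be}$ is preserved under multiplication of integral Laurent polynomials, so that replacing each true factor $1/(1+z^{2^{i}})^{\al}$ by its mod-$2^{\be}$ representative $\Pol^{+}_{i,\al,\be}$ introduces only a mod-$2^{\be}$ error in the product. Since there are only $j$ factors (a fixed finite number), the accumulated error is still $0$ modulo $2^{\be}$. Thus the proof is a short direct computation rather than an induction, leaning entirely on Lemma~\ref{lem:1+z}.

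\begin{proof}
We use the factorisation
$$
1-z^{2^{j}}=(1-z)\prod_{i=0}^{j-1}\left(1+z^{2^{i}}\right),
$$
which follows by iterating $1-z^{2^{k}}=(1-z^{2^{k-1}})(1+z^{2^{k-1}})$. Consequently,
$$
\frac{1}{(1-z^{2^{j}})^{\al}}
=\frac{1}{(1-z)^{\al}}\prod_{i=0}^{j-1}\frac{1}{(1+z^{2^{i}})^{\al}}.
$$
By Lemma~\ref{lem:1+z}, for each $i$ with $0\le i\le j-1$ we have
$$
\frac{1}{(1+z^{2^{i}})^{\al}}=\Pol^{+}_{i,\al,\be}\left(z,(1-z)^{-1}\right)\quad\text{modulo }2^{\be},
$$
a polynomial in $z$ and $(1-z)^{-1}$ with integer coefficients. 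Since the class of such polynomials is closed under multiplication, and since multiplying integral (Laurent) power series that are congruent modulo $2^{\be}$ yields products congruent modulo $2^{\be}$, the finite product
$$
\Pol_{j,\al,\be}\left(z,(1-z)^{-1}\right):=(1-z)^{-\al}\prod_{i=0}^{j-1}\Pol^{+}_{i,\al,\be}\left(z,(1-z)^{-1}\right)
$$
is again a polynomial in $z$ and $(1-z)^{-1}$ with integer coefficients, and it satisfies
$$
\frac{1}{(1-z^{2^{j}})^{\al}}=\Pol_{j,\al,\be}\left(z,(1-z)^{-1}\right)\quad\text{modulo }2^{\be}.
$$
For $j=0$ the product is empty and the right-hand side is $(1-z)^{-\al}$, which is trivially of the required form. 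This completes the proof.
\end{proof}
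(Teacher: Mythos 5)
Your proof is correct and is essentially the paper's argument: the paper performs an induction on $j$ using the single factorisation $1-z^{2^j}=(1-z^{2^{j-1}})(1+z^{2^{j-1}})$ together with Lemma~\ref{lem:1+z}, and you have merely unrolled that induction into the explicit product $1-z^{2^j}=(1-z)\prod_{i=0}^{j-1}(1+z^{2^i})$ before invoking Lemma~\ref{lem:1+z} on each factor. The point you flag — that congruences modulo $2^\be$ between integral series are preserved under (finitely many) multiplications — is exactly what justifies both versions, so no gap remains.
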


\begin{proof}
We perform an induction on~$j$. Clearly, there is nothing to prove
for $j=0$.

For the induction step, we write
$$
\frac {1} {(1-z^{2^j})^\alpha} 
=\frac {1} {(1+z^{2^{j-1}})^\alpha} \cdot \frac1 {(1-z^{2^{j-1}})^\alpha} .
$$
By Lemma~\ref{lem:1+z}, we know that the first factor on the right-hand
is a polynomial in $z$ and $(1-z)^{-1}$, while the induction
hypothesis guarantees that the second factor is such a polynomial as well.
\end{proof}

Identity~\eqref{eq:Om'} and Lemma~\ref{lem:1-z} together imply 
our claim concerning $\Om'(z)$ (it should be observed that, modulo a
given $2$-power $2^\be$, the sum on the right-hand side of \eqref{eq:Om'}
is finite).

\begin{corollary} \label{cor:Om'}
The series $\Om'(z),$ when coefficients are reduced modulo
$2^\be$ for some fixed~$\be,$ 
can be expressed as a polynomial in $z$ and $(1-z)^{-1}$
with integer coefficients. 
\end{corollary}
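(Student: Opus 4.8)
The plan is to read the result off directly from the explicit series \eqref{eq:Om'} for $\Om'(z)$, combined with Lemma~\ref{lem:1-z}. First I would observe that, modulo a fixed power $2^\be$, almost all terms of the sum in \eqref{eq:Om'} vanish: since each summand carries a factor $4^e=2^{2e}$, the term indexed by $e$ is divisible by $2^\be$ as soon as $2e\ge\be$, that is, as soon as $e\ge\cl{\be/2}$. Hence, modulo $2^\be$, the right-hand side of \eqref{eq:Om'} reduces to the finite sum
\[
\Om'(z)=\sum_{e=0}^{\cl{\be/2}-1}4^e\left(\frac{z^{4^e-1}}{1-z^{2\cdot4^e}}+\frac{2z^{3\cdot4^e-1}}{(1-z^{2\cdot4^e})^2}\right)\quad\text{modulo }2^\be.
\]

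Next I would treat each of the finitely many summands separately. For fixed $e$, write $2\cdot4^e=2^{2e+1}=2^j$ with $j=2e+1$. The two denominators occurring in the $e$-th summand are then $(1-z^{2^j})^1$ and $(1-z^{2^j})^2$, and Lemma~\ref{lem:1-z} (applied with this $j$, with $\al=1$ respectively $\al=2$, and with the given $\be$) tells us that both $1/(1-z^{2^j})$ and $1/(1-z^{2^j})^2$ agree, modulo $2^\be$, with polynomials in $z$ and $(1-z)^{-1}$ having integer coefficients. Multiplying these by the monomials $z^{4^e-1}$, respectively $z^{3\cdot4^e-1}$, and by the integers $4^e$, respectively $2\cdot4^e$, keeps us within the ring of polynomials in $z$ and $(1-z)^{-1}$ with integer coefficients; and summing the finitely many contributions indexed by $e=0,1,\dots,\cl{\be/2}-1$ does so as well. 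This yields the claimed polynomial expression for $\Om'(z)$ modulo $2^\be$.

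There is essentially no serious obstacle here: the corollary is a formal consequence of \eqref{eq:Om'} and Lemma~\ref{lem:1-z}, and the only points requiring a moment's care are the bookkeeping that establishes the truncation of the series modulo $2^\be$, the observation that the surviving exponents $4^e-1$ and $3\cdot4^e-1$ of $z$ in the numerators are non-negative (which they are, since $4^e\ge1$), and the fact that the denominators are genuinely of the shape $(1-z^{2^j})^\al$ to which Lemma~\ref{lem:1-z} applies (which they are, with $j=2e+1$).
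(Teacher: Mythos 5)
Your proof is correct and follows essentially the same route as the paper, which simply notes that the identity \eqref{eq:Om'} together with Lemma~\ref{lem:1-z} gives the claim, after observing that modulo $2^\be$ the sum in \eqref{eq:Om'} is finite. Your write-up merely spells out the details (the truncation bound, the identification $2\cdot4^e=2^{2e+1}$, and the application of Lemma~\ref{lem:1-z} with $\al=1,2$) that the paper leaves implicit.
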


\section{The method}
\label{sec:method}

We consider a (formal) differential equation of the form
\begin{equation} \label{eq:diffeq}
\mathcal P(z;F(z),F'(z),F''(z),\dots,F^{(s)}(z))=0,
\end{equation}
where $\mathcal P$ is a polynomial with integer coefficients, which has a
unique power series solution $F(z)$ with integer coefficients when
the equation is considered modulo any fixed power of~$2$.
(In the literature, series obeying a polynomial relation of the
form \eqref{eq:diffeq} are known as {\it differentially algebraic}
series; see, for instance, \cite{BeReAA}.)
In this
situation, we propose the following algorithmic approach to
determining $F(z)$ modulo a $2$-power $2^{2^\al}$,
for some positive integer $\al$. 
Let us fix a positive integer $\ga$. 
We make the Ansatz
\begin{equation} \label{eq:Ansatz}
F(z)=\sum _{i=0} ^{2^{\al+1}-1}a_i(z)\Om^i( z^\ga)\quad 
\text {modulo }2^{2^\al},
\end{equation}
where $\Om(z)$ is given by \eqref{eq:Omdef},
and where the $a_i(z)$'s
are (at this point) undetermined elements of
$\Z[z,z^{-1},(1- z^\ga)^{-1}]$.
Now we substitute \eqref{eq:Ansatz} into \eqref{eq:diffeq}, and
we shall gradually determine approximations $a_{i,\be}(z)$ to $a_i(z)$ such that
\eqref{eq:diffeq} holds modulo~$2^\be$, for $\be=1,2,\dots,2^\al$. 
To initiate the procedure, we consider the differential equation
\eqref{eq:diffeq} modulo~$2$, with
\begin{equation} \label{eq:Ansatz1}
F(z)=\sum _{i=0} ^{2^{\al+1}-1}a_{i,1}(z)\Om^i( z^\ga)\quad \text {modulo
}2.
\end{equation}
By Corollary~\ref{cor:Om'}, we know that $\Om'(z^\ga)$, when
considered modulo~$2$, is in $\Z[z,(1-z^\ga)^{-1}]$. 
Consequently, we see that
the left-hand side of \eqref{eq:diffeq} is a polynomial in
$\Om( z^\ga)$ with coefficients in 
$\Z[z,z^{-1},(1- z^\ga)^{-1}]$. 
We reduce powers $\Om^k( z^\ga)$ with $k\ge2^{\al+1}$ using
the relation 
\begin{equation} \label{eq:OmRel}
\left(\Om^{2}( z^\ga)+\Om( z^\ga)
-\frac {z^\ga} {1- z^\ga}\right)^{2^\al}=0\quad \text {modulo
}2^{2^\al},
\end{equation}
which is the special case $N=2^\al$ of \eqref{eq:N}.
Since, at this point, we are only interested in finding a solution to
\eqref{eq:diffeq} modulo~$2$, Relation~\eqref{eq:OmRel} simplifies to
\begin{equation} \label{eq:OmRel2}
\Om^{2^{\al+1}}( z^\ga)+\Om^{2^{\al}}( z^\ga)
-\frac {z^{\ga\cdot 2^\al}} {(1- z^\ga)^{2^{\al}}}=0\quad \text {modulo
}2.
\end{equation}
Now we compare coefficients of powers $\Om^k( z^\ga)$,
$k=0,1,\dots,2^{\al+1}-1$. This yields a
system of $2^{\al+1}$ (differential) equations (modulo~$2$)
for unknown functions $a_{i,1}(z)$ in
$\Z[z,z^{-1},(1- z^\ga)^{-1}]$, for $i=0,1,\dots,
2^{\al+1}-1$,
which may or may not have a solution. 

Provided we have already found 
functions $a_{i,\be}(z)$ in
$\Z[z,z^{-1},(1- z^\ga)^{-1}]$, $i=0,1,\dots,\break
2^{\al+1}-1$, 
for some $\be$ with $1\le \be\le 2^{\al}-1$, such that
\begin{equation} \label{eq:Ansatz2}
F(z)=\sum _{i=0} ^{2^{\al+1}-1}a_{i,\be}(z)\Om^i( z^\ga)
\end{equation}
solves \eqref{eq:diffeq} modulo~$2^\be$, we put 
\begin{equation} \label{eq:Ansatz2a}
a_{i,\be+1}(z):=a_{i,\be}(z)+2^{\be}b_{i,\be+1}(z),\quad 
i=0,1,\dots,2^{\al+1}-1,
\end{equation}
where the $b_{i,\be+1}(z)$'s are (at this point) undetermined 
elements in $\Z[z,z^{-1},(1- z^\ga)^{-1}]$. Next we substitute
\begin{equation} \label{eq:Ansatz2b} 
F(z)=\sum _{i=0} ^{2^{\al+1}-1}a_{i,\be+1}(z)\Om^i( z^\ga)
\end{equation}
in \eqref{eq:diffeq} and consider the result modulo~$2^{\be+1}$. 
Using Corollary~\ref{cor:Om'} and Lemma~\ref{lem:1-z},
we see that derivatives of $\Om(z^\ga)$, when considered
modulo~$2^{\be+1}$, can be expressed as 
polynomials in $z$ and $(1-z^\ga)^{-1}$. Consequently, we may
expand the left-hand side of \eqref{eq:diffeq} 
as a polynomial in $\Om( z^\ga)$ with
coefficients in $\Z[z,z^{-1},(1- z^\ga)^{-1}]$. Subsequently,
we apply again the reduction
using Relation~\eqref{eq:OmRel}. By comparing coefficients of powers
$\Om^k( z^\ga)$, $k=0,1,\dots,2^{\al+1}-1$,
we obtain a
system of $2^{\al+1}$ (differential) equations (modulo~$2^{\be+1}$)
for the unknown functions $b_{i,\be+1}(z)$,
$i=0,1,\dots,2^{\al+1}-1$,
which may or may not have a solution. If we manage to push this 
procedure through until $\be=2^\al-1$, then,
setting $a_i(z)=a_{i,2^\al}(z)$, $i=0,1,\dots,2^{\al+1}-1$,
the series $F(z)$ as given in \eqref{eq:Ansatz} is a solution to
\eqref{eq:diffeq} modulo~$2^{2^\al}$, as required.

We should point out that, in order for the method to be meaningful,
it is essential to assume that the differential equation \eqref{eq:diffeq},
when considered modulo an arbitrary $2$-power $2^e$,
determines the solution $F(z)$ {\it uniquely} modulo~$2^e$ (which we do).
Otherwise, our method could produce several different ``solutions,"
and it might be difficult to decide which of them actually 
corresponds to the series $F(z)$ we are interested in.

\section{Motzkin numbers}
\label{sec:Motzkin}

Let $M_n$ be the $n$-th {\it Motzkin number}, that is, the number of
lattice paths from $(0,0)$ to $(n,0)$ consisting of steps taken from
the set
$\{(1,0),(1,1),(1,-1)\}$ never running below the $x$-axis.
It is well-known (cf.\ \cite[Ex.~6.37]{StanBI})
that the generating function $M(z)=\sum_{n\ge0}M_n\,z^n$ is given by
\eqref{eq:Motzkin}
and hence satisfies the functional equation
\begin{equation} \label{eq:MotzkinEF}
z^2M^2(z)+(z-1)M(z)+1=0.
\end{equation}

\begin{theorem} \label{thm:Motzkin}
Let $\Om(z)$ be given by \eqref{eq:Omdef}{\em ,} 
and let $\al$ be some positive integer.
Then the generating function $M(z)=\sum_{n\ge0}M_n\,z^n$ 
for the Motzkin numbers,
reduced modulo~$2^{2^\al},$ 
can be expressed as a polynomial in $\Om(z^4)$ of the form
$$
M(z)=\sum _{i=0} ^{2\cdot 2^{\al}-1}a_{i}(z)\Om^{i}(z^4)\quad 
\text {\em modulo }2^{2^\al},
$$ 
where the coefficients $a_{i}(z),$ $i=0,1,\dots,2^{\al+1}-1,$ 
are Laurent polynomials in $z$ and $1-z$.
\end{theorem}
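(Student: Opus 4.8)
The plan is to apply the general method of Section~\ref{sec:method} to the functional equation \eqref{eq:MotzkinEF}, which is of the form \eqref{eq:diffeq} (with $s=0$, i.e.\ no derivatives of $M(z)$ actually occur, though derivatives of $\Om$ will appear after substitution). First I would verify the hypothesis needed to make the method meaningful: that \eqref{eq:MotzkinEF}, considered modulo any fixed $2$-power $2^e$, has a \emph{unique} power series solution with integer coefficients. This follows by a standard Hensel-type / iterative argument: from $z^2M^2+(z-1)M+1=0$ one gets $M(z)=1+zM(z)+z^2M^2(z)$, and the right-hand side is a contraction in the $z$-adic topology, so the coefficients $M_n$ are determined recursively and uniquely, hence also uniquely modulo $2^e$. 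This legitimises the Ansatz \eqref{eq:Ansatz} with basic series $\Om(z^\ga)$ for a suitable $\ga$.

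The next step is to choose the dilation parameter $\ga$. I would argue that $\ga=4$ is forced by the combinatorial structure of $\Om$: Proposition~\ref{prop:minpol} says $\Om^2(z)+\Om(z)=z/(1-z)$ modulo $2$, so modulo $2$ the series $\Om(z)$ is (a branch of) the algebraic function satisfying $Y^2+Y=z/(1-z)$, which is exactly the generating function $C(z)$-type object governing parity of Motzkin-like numbers once one accounts for the known fact (Deutsch--Sagan) that $M_n\bmod 2$ is itself a $2$-automatic sequence with period structure in base $4$. Concretely, one checks modulo $2$ that $M(z)$ can be written in terms of $\Om(z^4)$: indeed $z^2M^2+(z+1)M+1=0$ modulo $2$, and substituting $M=a_0(z)+a_1(z)\Om(z^4)+\dots$ and using $\Om'(z^4)$, reducible by Corollary~\ref{cor:Om'} to a polynomial in $z$ and $(1-z)^{-1}$ (note $(1-z^4)^{-1}$ reduces further via Lemma~\ref{lem:1-z}), one finds that the coefficient comparison in powers of $\Om(z^4)$ has a solution with $a_{i,1}(z)\in\Z[z,z^{-1},(1-z)^{-1}]$. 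This base case $\be=1$ is where the correct value of $\ga$ reveals itself, and I would present the explicit mod-$2$ solution as the anchor.

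With the base case in hand, the bulk of the proof is the inductive step of the method: assuming we have $a_{i,\be}(z)\in\Z[z,z^{-1},(1-z)^{-1}]$ for which \eqref{eq:Ansatz2} solves \eqref{eq:MotzkinEF} modulo $2^\be$ (with $1\le\be\le 2^\al-1$), set $a_{i,\be+1}=a_{i,\be}+2^\be b_{i,\be+1}$ as in \eqref{eq:Ansatz2a}, substitute \eqref{eq:Ansatz2b} into \eqref{eq:MotzkinEF}, and work modulo $2^{\be+1}$. Because $z^2M^2+(z-1)M+1\equiv 0\pmod{2^\be}$ already, the mod-$2^{\be+1}$ equation becomes, after dividing the known part by $2^\be$, a \emph{linear} equation for the $b_{i,\be+1}(z)$ of the shape
\begin{equation*}
\bigl(2zM_{\be}(z)-1+z\bigr)\sum_{i}b_{i,\be+1}(z)\,\Om^i(z^4)\equiv -\,2^{-\be}\bigl(z^2M_\be^2(z)+(z-1)M_\be(z)+1\bigr)\pmod 2,
\end{equation*}
where $M_\be(z)=\sum_i a_{i,\be}(z)\Om^i(z^4)$; modulo $2$ the coefficient $2zM_\be-1+z$ reduces to $z-1$, which is invertible in $\Z[z,z^{-1},(1-z)^{-1}]$, so one solves for $\sum_i b_{i,\be+1}\Om^i(z^4)$ directly, then uses Proposition~\ref{prop:minpol}/\eqref{eq:N} and Corollary~\ref{cor:Om'} together with Lemmas~\ref{lem:1+z}--\ref{lem:1-z} to re-expand everything as a polynomial in $\Om(z^4)$ of degree $<2^{\al+1}$ with coefficients again in $\Z[z,z^{-1},(1-z)^{-1}]$, reading off the individual $b_{i,\be+1}(z)$. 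Pushing the induction to $\be=2^\al$ and setting $a_i(z)=a_{i,2^\al}(z)$ yields the claimed representation. The main obstacle, and the point requiring genuine care rather than routine bookkeeping, is controlling the \emph{denominators}: one must show that throughout the induction the coefficients stay in $\Z[z,z^{-1},(1-z)^{-1}]$ — i.e.\ that no factor other than powers of $z$ and $1-z$ ever enters a denominator. This is exactly what Lemmas~\ref{lem:1+z} and \ref{lem:1-z} are designed to guarantee when $\Om'(z^4)$ (or higher derivatives, coming from repeated differentiation of the relation \eqref{eq:OmRel}) is expanded, but one has to check that the linear-algebra step of solving for the $b_{i,\be+1}$ — which requires inverting $z-1$ and possibly $z$ — introduces only these allowed denominators, and that the reduction modulo \eqref{eq:OmRel} does not spoil this. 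Once that denominator-stability is established, the fact that the $a_i(z)$ are genuinely Laurent polynomials in $z$ and $1-z$ (rather than merely rational functions) follows because each arises as a finite $\Z$-combination of such, and multiplying out by a suitable power of $z(1-z)$ at the end confirms it; I expect this to be the step where the write-up must be most explicit.
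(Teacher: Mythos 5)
Your overall plan is the paper's own: apply the iterative method of Section~\ref{sec:method} to \eqref{eq:MotzkinEF} with $\ga=4$, and your treatment of the inductive step is essentially identical to the paper's (after substituting $a_{i,\be}+2^\be b_{i,\be+1}$ the cross terms are killed modulo $2^{\be+1}$, the unknowns appear linearly with coefficient $z-1$ --- your $2zM_\be-1+z$ should read $2z^2M_\be+z-1$, but both reduce to $z-1$ modulo~$2$ --- and the system is trivially and uniquely solvable in $\Z[z,z^{-1},(1-z)^{-1}]$). Your uniqueness remark via $M=1+zM+z^2M^2$ is fine, and your worry about denominators is, if anything, overcautious: since \eqref{eq:MotzkinEF} contains no derivatives, neither $\Om'$ nor Lemma~\ref{lem:1+z} is ever needed here; the only reductions used are \eqref{eq:OmRel} and $(1-z^4)^{-1}=(1-z)^{-4}$ modulo~$2$, so stability of the coefficient ring is immediate. (Your aside that ``derivatives of $\Om$ will appear after substitution'' is incorrect, though harmless.)

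The genuine gap is the base step. You write that ``one finds that the coefficient comparison in powers of $\Om(z^4)$ has a solution'' modulo~$2$ and that you ``would present the explicit mod-$2$ solution as the anchor'', but you never produce it, and its existence is exactly the non-formulaic content of the theorem: as the paper stresses, the mod-$2$ system obtained by comparing powers of $\Om(z^4)$ ``may or may not have a solution'', and the heuristic appeal to Deutsch--Sagan parity and to $\ga=4$ being ``forced'' does not establish that it does. The paper closes this by exhibiting
\begin{equation*}
M_1(z)=\sum_{k=1}^{\al+1}\frac {z^{2^k-2}} {(1-z)^{2^k-1}}+\frac {1-z} {z^2}\,\Om^{2^\al}(z^4)
\end{equation*}
and verifying directly, using \eqref{eq:OmRel2}, that it satisfies \eqref{eq:MotzkinEF} modulo~$2$; note in particular that the telescoping of the sum over $k$ against its squared counterpart, and the cancellation of the two $\Om^{2^\al}(z^4)$ terms, is where the specific choices of $\ga=4$, of the exponent $2^\al$, and of the coefficient $(1-z)/z^2$ actually earn their keep. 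Without this explicit construction (or some other existence argument for the $\be=1$ solution), your induction has no starting point, so the proof as proposed is incomplete at precisely the step the rest of the argument depends on.
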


\begin{proof}
We apply a slight variation of 
the method from Section~\ref{sec:method}. We start by
making the Ansatz (compare with \eqref{eq:Ansatz1})
\begin{equation*} 
M(z)=\sum _{i=0} ^{2^{\al+1}-1}a_i(z)\Om^i(z^4)\quad 
\text {modulo }2^{2^\al},
\end{equation*}
where $\Om(z)$ is given by \eqref{eq:Omdef},
and where the $a_i(z)$'s
are (at this point) undetermined Laurent polynomials in $z$ and
$1-z$.

For the ``base step" (that is, for $\be=1$), we claim that
$$
M_1(z)=\sum_{k=1}^{\al+1}\frac {z^{2^k-2}} {(1-z)^{2^k-1}}+
\frac {1-z} {z^2}\Om^{2^\al}(z^4)
$$
solves \eqref{eq:MotzkinEF} modulo~2. Indeed, substitution of $M_1(z)$
in place of $M(z)$ on the left-hand side of \eqref{eq:MotzkinEF} yields
the expression
\begin{multline*}
{z^2}\left(
\sum_{k=1}^{\al+1}\frac {z^{2\cdot(2^k-2)}} {(1-z)^{2\cdot(2^k-1)}}+
\frac {(1-z)^2} {z^4}\Om^{2^{\al+1}}(z^4)
\right)\\
+(z-1)\left(\sum_{k=1}^{\al+1}\frac {z^{2^k-2}} {(1-z)^{2^k-1}}+
\frac {1-z} {z^2}\Om^{2^\al}(z^4)\right)
+1\quad \text{modulo }2.
\end{multline*}
Now one uses the relation \eqref{eq:OmRel2} 
to ``get rid of" $\Om^{2^{\al+1}}(z^4)$. This leads
to the expression
\begin{multline*}
{z^2}\left(
\sum_{k=1}^{\al+1}\frac {z^{2\cdot(2^k-2)}} {(1-z)^{2\cdot(2^k-1)}}+
\frac {(1-z)^2} {z^4}\Om^{2^{\al}}( z^4)
+\frac {(1-z)^2} {z^4}\frac {z^{4\cdot 2^\al}} {(1- z^4)^{2^{\al}}}
\right)\\
+(z-1)\left(\sum_{k=1}^{\al+1}\frac {z^{2^k-2}} {(1-z)^{2^k-1}}+
\frac {1-z} {z^2}\Om^{2^\al}(z^4)\right)
+1\\
=
\sum_{k=1}^{\al+1}\frac {z^{2^{k+1}-2}} {(1-z)^{2^{k+1}-2}}+
\frac {(1-z)^2} {z^2}\Om^{2^{\al}}( z^4)
+\frac {z^{2^{\al+2}-2}} {(1- z)^{2^{\al+2}-2}}
\\
+\sum_{k=1}^{\al+1}\frac {z^{2^k-2}} {(1-z)^{2^k-2}}+
\frac {(1-z)^2} {z^2}\Om^{2^\al}(z^4)
+1
\quad \text{modulo }2,
\end{multline*}
which is indeed $0$ modulo~2.

\medskip
After we have completed the ``base step," we now proceed with the
iterative steps described in Section~\ref{sec:method}. We consider
the Ansatz \eqref{eq:Ansatz2}--\eqref{eq:Ansatz2b}, where the
coefficients $a_{i,\be}(z)$ are supposed to provide a solution
$M_{\be}(z)=\sum _{i=0} ^{2^{\al+1}-1}a_{i,\be}(z)\Om^i(z^4)$ to
\eqref{eq:MotzkinEF} modulo~$2^\be$. This Ansatz, substituted in
\eqref{eq:MotzkinEF}, produces the congruence
\begin{equation*}
z^2M_{\be}^2(z)+(z-1)M_{\be}(z)
+2^\be(z-1)\sum _{i=0} ^{2^{\al+1}-1}b_{i,\be+1}(z)\Om^i(z^4)
+1=0
\quad 
\text {modulo }2^{\be+1}.
\end{equation*}
By our assumption on $M_{\be}(z)$, we may divide by $2^\be$.
Comparison of powers of $\Om(z^4)$ then yields a system of congruences
of the form
$$
(1-z)b_{i,\be+1}(z)+\text {Pol}_i(z)=0\quad 
\text {modulo }2,\quad \quad 
i=0,1,\dots,2^{\al+1}-1,
$$
where $\text {Pol}_i(z)$, $i=0,1,\dots,2^{\al+1}-1$, are in
$\Z[z,z^{-1},(1-z)^{-1}]$. This system being trivially
uniquely solvable, we have proved that, for an arbitrary positive
integer $\al$, our (variant of the) algorithm of
Section~\ref{sec:method} will produce a solution $M_{{2^{\al+1}}}(z)$ 
to\break \eqref{eq:MotzkinEF} modulo~$2^{2^\al}$ which is a
polynomial in $\Om(z^4)$ with coefficients that are
in\break $\Z[z,z^{-1},(1-z)^{-1}]$.
\end{proof}

\section{The generating function for Motzkin numbers modulo $8$}
\label{sec:M8}

We have implemented the algorithm described in the proof of
Theorem~\ref{thm:Motzkin}.\footnote{The {\sl Mathematica} input
files are freely available; see the Note at the end of the
Introduction.} 
If we apply it with $\al=2$, then we obtain
\begin{multline} \label{eq:M16}
\sum_{n\ge0}M_n\,z^n
=
\frac{8 z^{15}+8 z^{11}+8 z^9+8 z^8+8 z^5}
      {(1-z)^{19}}
+\frac{4 z^{10}+12 z^9+12 z^7}
      {(1-z)^{13}}\\
+\frac{2 z^{10}+4 z^8+12 z^7+2 z^6+12 z^5+2 z^4}
      {(1-z)^{11}}
+\frac{z^6}{(1-z)^7}
+\frac{z^2}{(1-z)^3}
+\frac{1}{1-z}\\
+ \frac{4 z^{10}
   }{(1-z)^{11}}\Om(z)
+\left(\frac{8
   z^{10}}{(1-z)^{13}}+\frac{4
   z^8}{(1-z)^9}+\frac{2
   z^6}{(1-z)^7}\right) \Om^2(z)\\
+\left(\frac{8
   z^8}{(1-z)^9}+\frac{8
   z^6}{(1-z)^9}+\frac{8
   z^4}{(1-z)^9}+\frac{4
   z^2}{(1-z)^7}\right) \Om^3(z)\\
+\left(
\frac{8 z^{10}}{(1-z)^{11}}
+\frac{8( z^9+ z^8+ z^6+ z^4+ z+1)}
      {(1-z)^{11}}
+\frac{4 z^4+12 z+12 z^{-1}}
      {(1-z)^5}\right.\\\left.
+\frac{2 z^2+12 z+6+12 z^{-1}+2 z^{-2}}
      {(1-z)^3}
+\frac{15}{z}+\frac{1}{z^2}
\right) \Om^4(z)\\
+\left(
\frac{8 z^2}{(1-z)^3}
+\frac{4 z^{-2}}{(1-z)^3}\right) \Om^5(z)
+\left(
\frac{8 (z^3+1+ z^{-1})}{(1-z)^5}
+\frac{4 z^{-2}}{(1-z)}
+\frac{2}{z^2}
+\frac{14}{z}\right)
   \Om^6(z)\\
+\left(\frac{4}{z^2}+\frac{12}{z}+\frac{8}{1-z}\right)
   \Om^7(z)
\quad \quad \text{modulo }16.
\end{multline}

We are aiming at congruences for the Motzkin numbers
modulo~$8$, thus we need to reduce the above expression
modulo~$8$. In order to do this efficiently, we introduce the
error series $E(z)$ defined by the relation
\begin{equation} \label{eq:OmE} 
\Om^2(z) + \Om(z) - \frac {z} {1 - z}-2E(z)=0.
\end{equation}
A straightforward computation shows that, explicitly, the series
$E(z)$ is given by
\begin{equation} \label{eq:Edef}
E(z)=\underset{(e_1,f_1)\prec(e_2,f_2)}{\sum_{e_1,f_1,e_2,f_2\ge0} }
z^{4^{e_1}(2f_1+1)+4^{e_2}(2f_2+1)}.
\end{equation}
Here, the symbol $\prec$ refers to the lexicographic order on pairs of
integers, i.e., $(e_1,f_1)\prec(e_2,f_2)$ if, and only if,
$e_1<e_2$, or $e_1=e_2$ and $f_1<f_2$.

We combine three reductions: (1) powers of $\Om(z^4)$ are reduced by
means of \eqref{eq:N} with $N=3$; (2) powers of $\Om(z^4)$ are further
reduced using Relation~\eqref{eq:OmE} (at the cost of introducing the
error series $E(z)$); (3) coefficients are reduced modulo~$8$.
The final result is
\begin{multline} \label{eq:MM}
M(z)=  \left(\frac{4 
   (z+1)}{z^2}E(z^4)+\frac{7}{z^2}+\frac{1}{z}\right.\\
\left.+\frac{2 z^7+2
   z^6+6 z^5+6 z^4+6 z^3+6 z^2+2
   z+2}{1-z^8}\right)\Om(z^4)\\
+\frac{4
   E(z^8) (z+1)}{z^2}-2 E(z^4)
   \left(\frac{7}{z^2}+\frac{5}{z}+\frac{2}
   {1-z}\right)\\
+\frac{\begin{matrix}
(z^{15}+5 z^{14}+7
   z^{13}+3 z^{12}+3 z^{11}+3 z^{10}+5
   z^9+5 z^8\kern2cm\\\kern3cm+z^7+5 z^6+3 z^5+7 z^4+3 z^3+3
   z^2+z+1)\end{matrix}}{1-z^{16}}\\
\quad \quad \text{modulo }8.
\end{multline}
Here, we also used that $E^2(z^4)=E(z^8)$~modulo~$2$. (This explains
the occurrence of $E(z^8)$ in the above expression.)

In order to proceed, we need to know how to extract coefficients
of~$z^n$ from the series
$$
E(z^4)\Om(z^4),\
\Om(z^4),\
\frac {1} {1-z^8}\Om(z^4),\
E(z^4),\
\text{and }\frac {1} {1-z}E(z^4).
$$
The next section is devoted to solving these problems.

\section{Coefficient extraction}
\label{sec:aux}

We begin with coefficient extraction from the error series $E(z)$.
In all what follows, coefficients $n_i$ will be assumed to be
either 0 or~1.

\begin{lemma} \label{lem:E}
For all non-negative integers $n,$ we have
\begin{equation} \label{eq:E} 
\coef{z^n}E(z)\equiv
\begin{cases} 
\chi(e>0)(1+2n_1)+n_2+n_3+3n_4+n_5+3n_6+n_7+\cdots \pmod4,\\
&\kern-8cm
\text{if\/
}n=4^{e}(1+n_12+n_22^2+\dots+n_k2^k),\\
2\chi(e>0)+n_1+2n_2\pmod4,\\
&\kern-8cm
\text{if\/
}n=2\cdot4^{e}(1+n_12+n_22^2+\dots+n_k2^k),
\end{cases}
\end{equation}
where $\chi(\mathcal A)=1$ if the assertion $\mathcal{A}$ holds true,
and $\chi(\mathcal A)=0$ otherwise.
\end{lemma}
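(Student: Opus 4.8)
The plan is to extract the coefficient of $z^n$ directly from the explicit double sum \eqref{eq:Edef} defining $E(z)$, working modulo~$4$. By definition,
$$
\coef{z^n}E(z)=\#\bigl\{(e_1,f_1;e_2,f_2):e_i,f_i\ge0,\
4^{e_1}(2f_1+1)+4^{e_2}(2f_2+1)=n,\ (e_1,f_1)\prec(e_2,f_2)\bigr\}.
$$
This is exactly one half of the count that appeared in the proof of Lemma~\ref{lem:Om^2}, \emph{minus} the ``diagonal'' contribution coming from pairs with $(e_1,f_1)=(e_2,f_2)$; indeed, $\Om^2(z)=2E(z)+\Om(z)-z/(1-z)$ modulo~$2$ reflects precisely that unordered pairs split into ordered pairs (counted twice) plus diagonal pairs. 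So one route is to reuse the combinatorial bookkeeping of Lemma~\ref{lem:Om^2}. However, since here we need the answer modulo~$4$ rather than modulo~$2$, and since the parity of the number of ordered pairs is subtler, I would instead redo the count from scratch, following the same case analysis by $2$-adic valuation of~$n$.

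First I would write $n$ in the form dictated by the two cases: either $n=4^e(1+n_12+\cdots+n_k2^k)$ (so the lowest-order block has an even number of trailing binary zeros), or $n=2\cdot4^e(1+n_12+\cdots)$ (odd number of trailing zeros). In each case the constraint $4^{e_1}(2f_1+1)+4^{e_2}(2f_2+1)=n$ with $e_1\le e_2$ forces, via the factorisation \eqref{eq:ef1}, $4^{e_1}\mid n$ together with a parity condition that pins down whether $e_1$ equals the valuation-type exponent~$e$ or is smaller. For each admissible pair $(e_1,e_2)$ one counts the number of valid $(f_1,f_2)$: when $e_1<e_2$ this is a ``floor'' count $\fl{\tfrac12(n/4^{e_1\vee e_2}+\text{const})}$ as in Lemma~\ref{lem:Om^2}; when $e_1=e_2$ it is the number of representations of $(n/4^{e_1}-2)/2$ as an ordered sum $f_1+f_2$, namely $\tfrac12(n/4^{e_1})$ or $\tfrac12(n/4^{e_1}-1)$ up to rounding. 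Then one sums over all admissible $(e_1,e_2)$, imposes the lexicographic restriction $(e_1,f_1)\prec(e_2,f_2)$ (which for $e_1<e_2$ is automatic, and for $e_1=e_2$ roughly halves the count), and reduces modulo~$4$.

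The main obstacle will be the $e_1=e_2$ diagonal blocks together with the lexicographic tie-breaking: these are exactly the terms that distinguish $E(z)$ from ``half of $\Om^2(z)$'', and keeping them correct modulo~$4$ (not just modulo~$2$) requires care with the floor functions and with how many of the $\binom{f_1+f_2}{1}+1$ ordered pairs survive the condition $f_1<f_2$. Concretely, for a block with $e_1=e_2=e'$ one gets a contribution whose mod-$4$ value depends on $n/4^{e'}\bmod 4$ and hence, after telescoping over $e'$, on consecutive pairs of binary digits $n_{2i-1},n_{2i}$ of~$n$; this is the source of the alternating coefficients $1,1,3,1,3,1,\dots$ on $n_2,n_3,n_4,n_5,\dots$ in the first case of \eqref{eq:E}. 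I would organise the computation so that the off-diagonal $e_1<e_2$ sums reproduce the familiar $n_2+n_3+\cdots$ pattern modulo~$2$ (as in \eqref{eq:n2n3}), the diagonal sums supply the doubling that upgrades selected coefficients from~$1$ to~$3$ and contributes the $\chi(e>0)(1+2n_1)$ resp.\ $2\chi(e>0)+n_1+2n_2$ terms, and then collect everything. As a consistency check, reducing \eqref{eq:E} modulo~$2$ must agree with $\tfrac12\bigl(\coef{z^n}\Om^2(z)-\coef{z^n}\Om(z)+\coef{z^n}\tfrac{z}{1-z}\bigr)$ computed from Lemma~\ref{lem:Om^2}, and the small cases $n=1,2,3,4,5,6$ can be verified by hand against \eqref{eq:Edef}.
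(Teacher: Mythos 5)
Your overall strategy --- reading off $\coef{z^n}E(z)$ from \eqref{eq:Edef} as the number of quadruples $(e_1,f_1;e_2,f_2)$ with $(e_1,f_1)\prec(e_2,f_2)$ and $4^{e_1}(2f_1+1)+4^{e_2}(2f_2+1)=n$, split according to the number of trailing binary zeros of $n$, with floor counts for $e_1<e_2$ and a separate treatment of the diagonal $e_1=e_2$ --- is exactly the paper's route, and carried out carefully it works. However, the specific division of labour you announce is wrong and, followed literally, would not produce \eqref{eq:E}. You plan to keep the off-diagonal sums only modulo $2$ and let the diagonal blocks ``upgrade'' coefficients from $1$ to $3$; and you assert that the blocks $e_1=e_2=e'$ are the source of the pattern $1,1,3,1,3,\dots$ in the first case. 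Neither claim is true. If $n$ is odd (first case with $e=0$) there are \emph{no} diagonal solutions at all, since $e_1=e_2$ forces $n$ even, yet the coefficients $3$ are present; conversely, if $n=2\cdot4^e u$ with $u$ odd (second case) there are \emph{no} off-diagonal solutions, since $e_1<e_2$ forces $n4^{-e_1}$ to be odd. The $3$'s actually come from evaluating the off-diagonal count modulo $4$: for fixed $e_2>e_1=e$ the number of admissible $f_2$ is a floor expression congruent to $n_{2e_2}+n_{2e_2+1}+2n_{2e_2+2}$ modulo $4$ (digits of $n4^{-e}$), and summing over $e_2$ gives $n_2+n_3+3n_4+n_5+3n_6+\cdots$; its mod-$2$ reduction loses the terms $2(n_4+n_6+\cdots)$, and no diagonal contribution is available to restore them. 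A concrete failure: for $n=21$ the four solutions $(0,8;1,0)$, $(0,4;1,1)$, $(0,0;1,2)$, $(0,2;2,0)$ give $\coef{z^{21}}E(z)=4\equiv0\pmod4$, in agreement with $n_2+n_3+3n_4=4$, whereas the mod-$2$ bookkeeping you describe can at best assert the value is even and, if lifted naively, yields $n_2+n_3+n_4\equiv2\pmod4$.

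The diagonal blocks, by contrast, contribute only the remaining pieces of \eqref{eq:E}: in the first case with $e>0$ all levels $e_1=e_2=e'<e-1$ give counts divisible by $4$, and the level $e'=e-1$ (a count of pairs $f_1<f_2$ with prescribed sum) yields $1+2n_1$, i.e.\ the term $\chi(e>0)(1+2n_1)$; in the second case the whole answer is diagonal, the level $e'=e-1$ giving $2\chi(e>0)$ and the level $e'=e$ giving $n_1+2n_2$. So the correct organisation is: off-diagonal sums computed modulo $4$ (they produce the full alternating pattern), diagonal sums reduced to the top one or two levels. With that repair your argument coincides with the paper's proof; your consistency checks (mod-$2$ comparison via the exact identity \eqref{eq:OmE} and small $n$ by hand) would in fact have exposed the misattribution.
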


\begin{proof}
We have to count the number of quadruples
$(e_1,f_1;e_2,f_2)$, where $e_1,e_2,f_1,f_2$ 
are non-negative integers with $e_1<e_2$, or $e_1=e_2$ and $f_1<f_2$,
such that 
\begin{equation*} 
4^{e_1}(2f_1+1)+4^{e_2}(2f_2+1)=n.
\end{equation*}
We may write
\begin{equation} \label{eq:ef2} 
n=4^{e_1}(2f_1+1)+4^{e_2}(2f_2+1)=4^{e_1}\big(2f_1+1+4^{e_2-e_1}(2f_2+1)\big).
\end{equation}

Let first $n=1+n_12+n_22^2+\dots+n_k2^k$. Then, 
quadruples $(e_1,f_1;e_2,f_2)$ satisfying \eqref{eq:ef2} must satisfy
$e_1=0<e_2$. For given $e_2$, the possible numbers $f_2$ such that
\eqref{eq:ef2} is satisfied with a suitable $f_1$ are $0,1,\dots,F$,
where 
$$F=\fl{\frac {1} {2}\left(\frac {n-1} {4^{e_2}}-1\right)}.$$
Thus, the number of above quadruples equals
\begin{align}
\notag
\sum_{e_2>0}&\fl{\frac {1} {2}\left(\frac {n-1} {4^{e_2}}+1\right)}
=\sum_{e_2\ge1}
\fl{\frac {1} {2}\left(\frac {n_12+n_22^2+\dots+n_k2^k} {4^{e_2}}+1\right)}\\
\notag
&=\sum_{e_2\ge1}
\fl{\frac {n_1} {2^{2e_2}}
+\frac {n_2} {2^{2e_2-1}}+\dots
+\frac {n_{2e_2}+1} {2}+n_{2e_2+1}+2n_{2e_2+2}+\dots+2^{k-2e_2-1}n_k
}\\
\notag
&=\sum_{e_2\ge1}
(n_{2e_2}+n_{2e_2+1}+2n_{2e_2+2})\pmod 4\\
&=n_2+n_3+3n_4+n_5+3n_6+n_7+3n_8+\cdots\pmod4.
\label{eq:n2n3B}
\end{align}
This is in agreement with the
first case in \eqref{eq:E} with $e=0$.

Next let  $n=4^e(1+n_12+n_22^2+\dots+n_k2^k)$ with $e\ge1$. Then the above
arguments apply again (with $e_1=e<e_2$). Furthermore, there are now also
quadruples $(e_1,f_1;e_2,f_2)$ with $e_1=e_2<e$ satisfying
\eqref{eq:ef2}. Modulo~4, the only ones which are relevant in our count
are those with $e_1=e_2=e-1$ and 
$$
2f_1+2f_2+2=4+n_18+\dots+n_k2^{k+2}.
$$
The number of pairs $(f_1,f_2)$ of non-negative integers with
$f_1<f_2$ satisfying this equation equals
$$
1+n_12+\dots+n_k2^{k}\equiv 1+2n_1\pmod 4.
$$
If this is added to the right-hand side of \eqref{eq:n2n3B},
the resulting expression is in agreement with the first case of 
\eqref{eq:E} with $e>0$.

The last remaining case is $n=2\cdot
4^e(1+n_12+n_22^2+\dots+n_k2^k)$. Here, quadruples $(e_1,f_1;e_2,f_2)$
satisfying \eqref{eq:ef2} must necessarily satisfy $e_1=e_2\le e$. It is
then easy to see that, modulo~$4$, the only quadruples relevant for
our count are those with $e_1=e_2=e-1$ (which only exist if $e>0$), 
whose number is congruent to
$2$~ modulo~$4$, and those with
$e_1=e_2=e$, whose number equals
$$
n_1+n_22+\dots+n_k2^{k-1}\equiv n_1+2n_2\pmod 4.
$$
Both cases together lead to
the claimed result corresponding to the second case in \eqref{eq:E}. 
\end{proof}

Next, we turn to coefficient extraction from the series $E(z^4)/(1-z)$.
\begin{lemma} \label{lem:E1-z}
For all non-negative integers $n$ with binary representation
$n=n_0+n_12+n_22^2+\cdots,$ we have
\begin{multline} \label{eq:E1-z} 
\coef{z^n}E(z^4)\frac {1} {1-z}\equiv
\sum_{e_2>0} 
(n'_{2e_2+2}+n'_{2e_2+3})
(n_2+n_3+n_4+n_5+\cdots+n_{2e_2+1})\\
+\sum_{i\ge1}(n_{2i+1}+1)n_{2i+2}
\pmod2,
\end{multline}
where $n-4=n'_0+n'_12+n'_24+\cdots$.
\end{lemma}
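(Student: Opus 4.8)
The plan is to reduce the statement to the coefficient extraction already carried out in Lemma~\ref{lem:E}. Since
$$
\coef{z^n}E(z^4)\frac{1}{1-z}=\sum_{m=0}^{n}\coef{z^m}E(z^4)
=\sum_{\substack{m\le n\\ 4\mid m}}\coef{z^{m/4}}E(z),
$$
the task is to count, modulo~$2$, the pairs $(e_1,f_1)\prec(e_2,f_2)$ with $4\cdot\big(4^{e_1}(2f_1+1)+4^{e_2}(2f_2+1)\big)\le n$; equivalently, writing $n-4=n'_0+n'_1 2+\cdots$ as in the statement, we must sum the mod-$2$ reduction of $\coef{z^M}E(z)$ over all $M$ with $4M\le n$, i.e. $M\le\fl{n/4}$. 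First I would set up this telescoping and note that only the mod-$2$ part of Lemma~\ref{lem:E} is needed, so the $3n_4,3n_6,\dots$ terms collapse to $n_4,n_6,\dots$ and the two cases of \eqref{eq:E} become, modulo~$2$, $n_2+n_3+n_4+\cdots$ (in the $4^e(\text{odd part})$ case) and $n_1$ (in the $2\cdot4^e(\text{odd part})$ case).

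Next I would organise the sum over $M\le \fl{n/4}$ according to the $2$-adic structure of $M$. Split $M=4^{e}\cdot(\text{odd})$ versus $M=2\cdot 4^e\cdot(\text{odd})$; in each stratum the inner count is governed by how many integers of the prescribed shape lie below the threshold $\fl{n/4}$, and the mod-$2$ answer is dictated by the relevant binary digit of $\fl{n/4}$ — which is exactly where the ``shifted'' digits $n'_i$ of $n-4$ enter, since $\fl{n/4}$ and $\fl{(n-4)/4}=\fl{n/4}-1$ differ, and it is the digits of $n-4$ (after dividing by $4$, hence the index shift $n'_{2e_2+2},n'_{2e_2+3}$) that control the carry pattern. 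Summing the contribution $n'_{2e_2+2}+n'_{2e_2+3}$ from the ``$e_1=e_2$, many zeros'' layer, weighted by the number of admissible odd parts — which modulo~$2$ is $n_2+n_3+\cdots+n_{2e_2+1}$ — produces the first sum in \eqref{eq:E1-z}. The second sum $\sum_{i\ge1}(n_{2i+1}+1)n_{2i+2}$ comes from the ``boundary'' pairs with $e_1<e_2$ where the threshold falls exactly between two admissible values; these are precisely the $M$ for which a digit $n_{2i+2}=1$ forces an extra pair to be counted unless $n_{2i+1}=1$ already ``absorbs'' it, matching the factor $(n_{2i+1}+1)$.

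The concrete mechanism I would use for the inner counts is the same floor-function manipulation as in the proofs of Lemma~\ref{lem:Om^2} and Lemma~\ref{lem:E}: for fixed $e_1,e_2$ one has
$$
\#\{(f_1,f_2):4^{e_1}(2f_1+1)+4^{e_2}(2f_2+1)\le \fl{n/4}\}
=\sum_{f_2}\fl{\tfrac12\big(\tfrac{\fl{n/4}-4^{e_2}(2f_2+1)}{4^{e_1}}+1\big)},
$$
which, after reducing modulo~$2$, telescopes to an alternating sum of binary digits of $\fl{n/4}$, and hence (via $\fl{n/4}$ versus $n-4$) to the digits $n_i$ and $n'_i$. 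I would carry out this reduction stratum by stratum, exactly as in \eqref{eq:n2n3} and \eqref{eq:n2n3B}, and collect terms. \textbf{The main obstacle} I anticipate is bookkeeping the carries cleanly: the partial sum $\sum_{m\le n}$ converts ``$=n$'' counts into ``$\le n$'' counts, and the latter are sensitive to where exactly the threshold $\fl{n/4}$ lies relative to the jump points $4^{e}(2f+1)$ and $2\cdot4^{e}(2f+1)$; it is this interaction that is responsible both for the index shift to $n-4$ and for the somewhat unusual factor $(n_{2i+1}+1)$ in the second sum, and getting the parity of the boundary contribution right in every case will be the delicate part. Once the boundary analysis is pinned down, assembling the two sums in \eqref{eq:E1-z} is routine.
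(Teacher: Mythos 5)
Your opening reduction is fine and is essentially the same as the paper's starting point: writing $\coef{z^n}E(z^4)\frac{1}{1-z}=\sum_{M\le\fl{n/4}}\coef{z^M}E(z)$ is just the cumulative-count version of the paper's quintuple count (pairs $(e_1,f_1)\prec(e_2,f_2)$ with $4^{e_1+1}(2f_1+1)+4^{e_2+1}(2f_2+1)\le n$). The genuine gap is in the step where you evaluate this sum. You propose to stratify $M$ by its $2$-adic shape and claim that ``the mod-$2$ answer is dictated by the relevant binary digit of $\fl{n/4}$'', i.e.\ by how many integers of the prescribed shape lie below the threshold. But the summand is not constant on a stratum: by Lemma~\ref{lem:E}, for $M=4^e u$ ($u$ odd) the mod-$2$ value is $\chi(e>0)+u_2+u_3+\cdots$, where the $u_i$ are the digits of the \emph{odd part} of $M$, which varies over the stratum (incidentally, your summary drops the $\chi(e>0)$ term). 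So what you actually need is the parity of aggregates such as $\sum_{u\ \mathrm{odd},\,4^eu\le\fl{n/4}}(u_2+u_3+\cdots)$ and $\sum_{u\ \mathrm{odd},\,2\cdot4^eu\le\fl{n/4}}u_1$, and evaluating these is exactly the nontrivial floor-sum/carry computation that your sketch never performs. The paper does this work directly: it splits the pair count into $e_1<e_2$ and $e_1=e_2$; the first case is evaluated by the telescoping digit argument together with $F_2+1\equiv n'_{2e_2+2}+n'_{2e_2+3}$ (which is where $n-4$ enters, not via a vague comparison of $\fl{n/4}$ with $\fl{n/4}-1$), and the second case via the identity $\sum_{k=0}^{m}\fl{(k+1)/2}\equiv m_0(m_1+1)\pmod 2$.

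A further sign that the boundary analysis was not actually carried out is that your attribution of the two sums is reversed: in the correct computation the sum $\sum_{e_2>0}(n'_{2e_2+2}+n'_{2e_2+3})(n_2+\cdots+n_{2e_2+1})$ comes from the pairs with $e_1<e_2$, while $\sum_{i\ge1}(n_{2i+1}+1)n_{2i+2}$ comes from the pairs with $e_1=e_2$ (there are no ``boundary pairs with $e_1<e_2$'' producing it, nor does the factor $(n_{2i+1}+1)$ arise from any absorption mechanism --- it comes from the parity identity for $\sum\fl{(k+1)/2}$). So the plan reduces the problem correctly, but the heart of the proof --- the mod-$2$ evaluation of the stratified sums --- is missing, and the mechanism you substitute for it would give wrong answers as stated.
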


\begin{proof}
We have to count the number of quintuples
$(e_1,f_1;e_2,f_2;g)$, where $e_1,e_2,f_1,f_2,g$ 
are non-negative integers with $e_1<e_2$, or $e_1=e_2$ and $f_1<f_2$,
such that 
\begin{equation*} 
4^{e_1+1}(2f_1+1)+4^{e_2+1}(2f_2+1)+g=n.
\end{equation*}
Given $e_2$, the range of possible $f_2$'s is $0,1,\dots,F_2$,
where 
$$F_2=\fl{\frac {1} {2}\left(\frac {n-4} {4^{e_2+1}}-1\right)}.$$
Given $e_2$, $f_2$, and $e_1<e_2$, the range of possible $f_1$'s is 
$0,1,\dots,F_1$, where 
$$F_1=\fl{\frac {1} {2}\left(\frac {n-4^{e_2+1}(2f_2+1)} 
{4^{e_1+1}}-1\right)}.$$
Thus, the number of above quintuples with $e_1<e_2$ equals
\begin{align}
\notag
\sum_{e_2>0}&
\sum_{f_2=0} ^{F_2}
\sum_{e_1=0} ^{e_2-1}
\fl{\frac {1} {2}\left(\frac {n-4^{e_2+1}(2f_2+1)} {4^{e_1+1}}+1\right)}\\
&=
\notag
\sum_{e_2>0}
\sum_{f_2=0} ^{F_2}
\sum_{e_1=0} ^{e_2-1}
\left(
\fl{\frac {1} {2}\left(\frac {n}
  {4^{e_1+1}}+1\right)}
-2\cdot4^{e_2-e_1-1}(2f_2+1)
\right)
\\
&\equiv
\notag
\sum_{e_2>0}
\sum_{f_2=0} ^{F_2}
\sum_{e_1=0} ^{e_2-1}
(n_{2e_1+2}+n_{2e_1+3})
\pmod 2\\
&\equiv
\notag
\sum_{e_2>0}
\sum_{f_2=0} ^{F_2}
(n_2+n_3+n_4+n_5+\cdots+n_{2e_2+1})
\pmod 2\\
\notag
&\equiv
\sum_{e_2>0}
(F_2+1)\cdot
(n_2+n_3+n_4+n_5+\cdots+n_{2e_2+1})\pmod2\\
&\equiv
\sum_{e_2>0} 
(n'_{2e_2+2}+n'_{2e_2+3})
(n_2+n_3+n_4+n_5+\cdots+n_{2e_2+1})
\pmod 2.
\label{eq:n2n3C}
\end{align}
where $n-4=n'_0+n'_12+n'_24+\cdots$.

On the other hand, if $e_1=e_2$, then we want to count all triples
$(e_1,f_1,f_2)$ of non-negative integers with $f_1<f_2$ such that
$$
2\cdot 4^{e_1+1}(f_1+f_2+1)\le n.
$$
Since the number of pairs $(x,y)$ of non-negative integers with $x<y$
such that $x+y=k$ equals $\fl{(k+1)/2}$, the number of above triples
equals 
\begin{equation} \label{eq:e1F} 
\sum_{e_1\ge0}\sum_{k=0}^{F} \fl{\frac {k+1} {2}}
\end{equation}
where
$$
F=\fl{\frac {n} {2\cdot4^{e_1+1}}}-1.
$$
If $m=m_0+m_12+m_24+\cdots$, then it is not difficult to see that
$\sum_{k=0}^{m} \fl{(k+1)/2}$ is odd if and only if
$m\equiv1$~(mod~4). In symbols,
\begin{equation} \label{eq:sumk} 
\sum_{k=0}^{m} \fl{\frac {k+1}
  {2}}=m_0(m_1+1)\pmod2.
\end{equation}
Thus, modulo~2, the sum in \eqref{eq:e1F} becomes
\begin{equation}
\sum_{e_1\ge0}\sum_{k=0}^{F} \fl{\frac {k+1} {2}}
\equiv (n_3+1)n_4+(n_5+1)n_6+\cdots\pmod2.
\label{eq:n2n3D}
\end{equation}
The sum of \eqref{eq:n2n3C} and \eqref{eq:n2n3D} then yields
\eqref{eq:E1-z}.
\end{proof}

Our next series to be considered is $\Om(z)/(1-z^2)$.

\begin{lemma} \label{lem:Omz^2}
For all non-negative integers $n$ with binary representation
$n=n_0+n_12+n_22^2+\cdots,$ we have
\begin{equation} \label{eq:Omz^2} 
\coef{z^n} \frac {1} {1-z^2}\Om(z)\equiv
\begin{cases} 
1+n_1+2n_2\pmod4,&\text{if $n$ is odd,}\\
n_2+n_3+3n_4+n_5+3n_6+n_7+\cdots\pmod4,&\text{if $n$ is even.}
\end{cases}
\end{equation}
\end{lemma}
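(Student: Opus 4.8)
The plan is to read off the coefficient combinatorially and then evaluate the resulting count by a binary-digit analysis of a floor function, in the same spirit as the proofs of Lemmas~\ref{lem:Om^2} and~\ref{lem:E}. Since $\frac{1}{1-z^2}\Om(z)=\sum_{e,f,j\ge0}z^{4^e(2f+1)+2j}$, the coefficient $\coef{z^n}\frac{1}{1-z^2}\Om(z)$ equals the number of triples $(e,f,j)$ of non-negative integers with $4^e(2f+1)+2j=n$. Because $4^e(2f+1)$ is odd precisely when $e=0$, such triples must have $e=0$ when $n$ is odd and $e\ge1$ when $n$ is even; in either case $j$ is determined by $(e,f)$, so it remains to count the admissible pairs $(e,f)$.

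If $n$ is odd, the condition reduces to $2f+1\le n$, so the number of admissible $f$ is the number of odd positive integers not exceeding $n$, namely $(n+1)/2=1+n_1+2n_2+4n_3+\cdots$. Reducing this integer modulo~$4$ gives $1+n_1+2n_2$, which is the first case of \eqref{eq:Omz^2}.

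If $n$ is even, then for each fixed $e\ge1$ the admissible values of $f$ are $0,1,\dots,\fl{\tfrac12(n/4^e-1)}$, so the total count is
$$
\sum_{e\ge1}\left(\fl{\tfrac12\big(n/4^e-1\big)}+1\right)=\sum_{e\ge1}\fl{\tfrac12\big(n/4^e+1\big)},
$$
a sum with only finitely many non-zero terms. Writing $n=\sum_i n_i2^i$, I would split $\tfrac12(n/4^e+1)=A_e+B_e$ with $A_e=n_{2e+1}+2n_{2e+2}+4n_{2e+3}+\cdots\in\Z$ and $B_e=\tfrac12+\tfrac{n_{2e}}2+\tfrac{n_{2e-1}}4+\cdots+\tfrac{n_0}{2^{2e+1}}$. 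Since the part of $B_e$ beyond $\tfrac{n_{2e}}2$ is a sum of the form $\tfrac{n_{2e-1}}4+\cdots+\tfrac{n_0}{2^{2e+1}}$ and hence strictly less than $\tfrac12$, one gets $\fl{B_e}=n_{2e}$, so that $\fl{\tfrac12(n/4^e+1)}=n_{2e}+n_{2e+1}+2n_{2e+2}+4n_{2e+3}+\cdots\equiv n_{2e}+n_{2e+1}+2n_{2e+2}\pmod4$. Summing over $e\ge1$ and collecting the coefficients of the digits $n_i$ transforms this into $n_2+n_3+3n_4+n_5+3n_6+n_7+\cdots\pmod4$, which is the second case of \eqref{eq:Omz^2}.

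The main obstacle is exactly the floor estimate $\fl{B_e}=n_{2e}$ — that is, controlling how the low-order binary digits of $n$ interact with the $\tfrac12$-shift — but this is the same elementary computation already carried out in the derivation of \eqref{eq:n2n3B}, so no new difficulty arises; one only needs the routine observation that for each fixed $n$ all but finitely many summands vanish, so all the sums above are genuinely finite.
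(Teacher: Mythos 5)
Your proposal is correct and follows essentially the same route as the paper: both count solutions of $4^{e}(2f+1)+2g=n$, dispose of the odd case via $e=0$ and the count $(n+1)/2\equiv 1+n_1+2n_2\pmod 4$, and evaluate the even case through the same floor sum $\sum_{e\ge1}\fl{\tfrac12(n/4^e+1)}\equiv\sum_{e\ge1}(n_{2e}+n_{2e+1}+2n_{2e+2})\pmod 4$. Your explicit splitting into the integer part $A_e$ and fractional part $B_e$ is just a more detailed writing-out of the digit manipulation the paper performs inside the floor.
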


\begin{proof}
We have to count the number of triples
$(e_1,f_1;g)$, where $e_1,f_1,g$ 
are non-negative integers such that 
\begin{equation*} 
4^{e_1}(2f_1+1)+2g=n.
\end{equation*}

Let first $n=1+n_12+n_22^2+\dots+n_k2^k$. 
Then necessarily $e_1=0$, so that we want to count all non-negative
integers $f_1$ with $f_1\le (n-1)/2$. Modulo~4, the number $(n-1)/2$ is
congruent to $n_1+2n_2$. The number of possible $f_1$'s is by~1 larger, and
this observation explains the first case in \eqref{eq:Omz^2}.

Now let $n=n_12+n_22^2+\dots+n_k2^k$. Given $e_1>0$, the integer
$f_1$ may range from $0$ to $F$, where
$$F=\fl{\frac {1} {2}\left(\frac {n} {4^{e_1}}-1\right)}.$$
Thus, the number of possible triples $(e_1,f_1;g)$ is
\begin{align*}
\sum_{e_2>0}&\fl{\frac {1} {2}\left(\frac {n} {4^{e_1}}+1\right)}\\
&=\sum_{e_2>0}
\fl{\frac {n_1} {2^{2e_1}}
+\frac {n_2} {2^{2e_1-1}}+\dots
+\frac {n_{2e_1}+1} {2}+n_{2e_1+1}+2n_{2e_1+2}+\dots+2^{k-2e_1-1}n_k
}\\
&\equiv\sum_{e_2>0}(n_{2e_1}+n_{2e_1+1}+2e_{2e_1+2})\pmod4\\
&\equiv
n_2+n_3+3n_4+n_5+3n_6+n_7+\cdots\pmod4,
\end{align*}
as desired.
\end{proof}

Finally, we address coefficient extraction from the product
$E(z)\Om(z)$.

\begin{lemma} \label{lem:OmE}
Let $n$ be a non-negative integer.

\smallskip
{\em(1)} If $n=4^e(1+n_12+n_22^2+\dots+n_k2^k),$ then
\begin{multline} \label{eq:OmE1} 
\coef{z^n} E(z)\,\Om(z)\equiv
(n_1+1)n_2+\sum_{i\ge1}n_{2i+1}(n_{2i+2}+1)
+\chi(e\ge1)\cdot(n_2+n_3+\cdots)\\
+\sum_{e_3>0} 
(n'_{2e_3}+n'_{2e_3+1})
(n_2+n_3+n_4+n_5+\cdots+n_{2e_3-1})
\pmod2,
\end{multline}
where $n4^{-e}-4=1+n'_12+n'_24+\cdots$.

\smallskip
{\em(2)} If $n=2\cdot 4^e(1+n_12+n_22^2+\dots+n_k2^k),$ then
\begin{equation} \label{eq:OmE2} 
\coef{z^n} E(z)\,\Om(z)\equiv
\chi(e\ge1)+(n_{1}+n_{2})
+(n_{1}+n_{2}+n_3+n_4+\cdots)(1+n_1)
\pmod2.
\end{equation}
\end{lemma}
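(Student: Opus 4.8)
The plan is to determine $\coef{z^n}E(z)\Om(z)$ by a direct combinatorial count, exactly parallel to the proofs of Lemmas~\ref{lem:E}, \ref{lem:E1-z}, and \ref{lem:Omz^2}. Writing out the convolution, $\coef{z^n}E(z)\Om(z)$ equals the number of sextuples $(e_1,f_1;e_2,f_2;e_3,f_3)$ of non-negative integers with $(e_1,f_1)\prec(e_2,f_2)$ such that
$$
4^{e_1}(2f_1+1)+4^{e_2}(2f_2+1)+4^{e_3}(2f_3+1)=n,
$$
the first two triples coming from $E(z)$ via \eqref{eq:Edef} and the third from $\Om(z)$ via \eqref{eq:Omdef}. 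Since we only need the answer modulo~$2$, I would split the count according to the relative sizes of $e_1=e_2$ (call it $d$) and $e_3$, and, as in the earlier lemmas, discard any sub-case whose contribution is manifestly even. The two cases of the statement correspond to whether the $2$-adic valuation structure of $n$ forces the relevant $4^e$-block to sit at an even or odd position of the binary expansion, just as in Lemma~\ref{lem:E}.

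For part~(1), with $n=4^e(1+n_12+\cdots+n_k2^k)$, I would first handle $e_3>d$: here $d$ must equal $e$, the pair $(f_1,f_2)$ with $f_1<f_2$ is constrained by $4^{e+1}(f_1+f_2+1)\le n - 4^{e_3}(2f_3+1)$, and summing $\fl{(k+1)/2}$ over the admissible range and then over $e_3$ and $f_3$ should, via the identity \eqref{eq:sumk} already proved in Lemma~\ref{lem:E1-z}, reproduce the term $(n_1+1)n_2+\sum_{i\ge1}n_{2i+1}(n_{2i+2}+1)$. Next, $e_3<d$: then $d=e_3+$(something)$\le e$ is impossible unless $e\ge1$, and modulo~$2$ only $d=e$, $e_3$ ranging, survives, contributing the floor-sums that collapse (by the same computation as in Lemma~\ref{lem:E1-z}, now with $n'$ defined via $n4^{-e}-4$) to $\sum_{e_3>0}(n'_{2e_3}+n'_{2e_3+1})(n_2+\cdots+n_{2e_3-1})$. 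Finally $e_3=d$: here one counts triples $(f_1,f_2,f_3)$ with $f_1<f_2$ and $2f_1+2f_2+2f_3+3=n4^{-e}$, i.e.\ essentially a three-part composition with one order constraint; the parity of this count should yield the $\chi(e\ge1)\cdot(n_2+n_3+\cdots)$ term after one notes that the $d=e-1$ sub-case (which exists only when $e\ge1$) contributes the odd part and the $d=e$ sub-case is even. I would mirror this bookkeeping for part~(2), where $n=2\cdot 4^e(1+n_12+\cdots)$ forces $e_1=e_2=e_3$ to be the common valuation block, reducing the problem to counting triples $(f_1,f_2,f_3)$ with $f_1<f_2$ and a fixed value of $f_1+f_2+f_3$ (or, for the $d=e-1$ contribution, noting it is congruent to~$1$), and then extracting the parity as a function of $n_1,n_2,n_3,\dots$.

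The main obstacle, I expect, will be the $e_3=d$ (equal-valuation) sub-case in part~(1): there the constraint couples all three variables $f_1<f_2$ and $f_3$ in a single linear equation, so the count is a sum of triangular-number-type quantities rather than a clean product, and getting its parity to simplify to exactly $\chi(e\ge1)(n_2+n_3+\cdots)$ — and, crucially, verifying that the spurious $d=e$ piece cancels modulo~$2$ — requires care with the identity \eqref{eq:sumk} and with which positions of the binary expansion of $n4^{-e}$ or $n4^{-e}-4$ are being read. A secondary nuisance is consistency of indexing conventions: the earlier lemmas shift between $n-4$, $n4^{-e}$, and $n4^{-e}-4$ as the "reference'' integer whose digits appear, and I would have to be scrupulous that the $n'_j$ in \eqref{eq:OmE1} really are the digits of $n4^{-e}-4$ and not of some neighbouring integer, since an off-by-one here changes the answer. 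Once these two points are pinned down, assembling the three sub-case contributions and reducing modulo~$2$ is routine, and the same template dispatches part~(2) with strictly less work because the valuation structure leaves only one block active.
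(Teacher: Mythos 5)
Your starting point (counting sextuples $(e_1,f_1;e_2,f_2;e_3,f_3)$ with $(e_1,f_1)\prec(e_2,f_2)$) is the same as the paper's, but your decomposition ``according to the relative sizes of $e_1=e_2$ (call it $d$) and $e_3$'' silently assumes $e_1=e_2$, i.e.\ it only retains those monomials of $E(z)$ in \eqref{eq:Edef} coming from pairs with equal first coordinates. The series $E(z)$ also contains all terms $z^{4^{e_1}(2f_1+1)+4^{e_2}(2f_2+1)}$ with $e_1<e_2$, and their contribution does not cancel modulo~$2$. Concretely, for $n=9$ (part~(1) with $e=0$, $n_3=1$) one has $\coef{z^9}E(z)\,\Om(z)=5\equiv1$, in agreement with \eqref{eq:OmE1}, whereas the sextuples with $e_1=e_2$ contribute only $4\equiv0$: the representation $9=5+4$ with $5=1+4$ (so $e_1=0<e_2=1$) is exactly of the type your split discards. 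In the paper's proof it is precisely the configurations with $e_1<e_2$ (the sub-cases $e_1=e<e_2=e_3$ and $e_1=e<e_2<e_3$) that produce the terms $\sum_{i\ge1}n_{2i+1}(n_{2i+2}+1)$ and the $n'$-sum in \eqref{eq:OmE1}, so your attribution of these terms to the sub-cases $e_3>d$ and $e_3<d$ cannot work out; likewise ``$d$ must equal $e$'' is false there (the sub-case $e_1=e_2<e_3$ is what yields $\chi(e\ge1)(n_2+n_3+\cdots)$, and it requires $d<e$). The analogous claim in part~(2), that the hypothesis forces $e_1=e_2=e_3$, is also wrong: all three equal is impossible, since then the left-hand side has even $2$-adic valuation while $n=2\cdot4^e u$ has odd valuation; the forced configuration is $e_1=e_2<e_3$.

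The missing idea is the symmetrization the paper performs before any case analysis: modulo~$2$ one pairs the sextuples with $(e_1,f_1)\prec(e_3,f_3)\prec(e_2,f_2)$ against those with $(e_3,f_3)\prec(e_1,f_1)\prec(e_2,f_2)$, thereby replacing the asymmetric count (first two pairs ordered, third free) by the count of weakly increasing triples $(e_1,f_1)\preceq(e_2,f_2)\preceq(e_3,f_3)$, not all equal. Only after this step does a clean valuation-based case split become available — for part~(1) the four sub-cases all equal, $e_1=e<e_2=e_3$, $e_1=e<e_2<e_3$, $e_1=e_2<e_3$, matching the four summands of \eqref{eq:OmE1} — and without it (or an equivalent device) the interleavings of $e_3$ with $e_1<e_2$ make the bookkeeping incorrect, as the $n=9$ example shows. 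A further, smaller, point: once the $e_i$ and $f_3$ are fixed, the equation determines $f_1+f_2$, so the constraint you write as the inequality $4^{e+1}(f_1+f_2+1)\le n-4^{e_3}(2f_3+1)$ should be an equality; what actually enters is the count of pairs $f_1<f_2$ (or $f_1\le f_2$) with prescribed sum, fed into \eqref{eq:sumk}. As it stands, the proposal would not reproduce \eqref{eq:OmE1} or \eqref{eq:OmE2}.
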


\begin{proof}
We have to count the number of sextuples
$(e_1,f_1;e_2,f_2;e_3,f_3)$, where $e_1,e_2,f_1,f_2,\break e_3,f_3$ 
are non-negative integers with $(e_1,f_1)\prec (e_2,f_2)$,
such that 
\begin{equation} \label{eq:ef3} 
4^{e_1}(2f_1+1)+4^{e_2}(2f_2+1)+4^{e_3}(2f_3+1)=n.
\end{equation}
We need this number only modulo~2.
By pairing up sextuples $(e_1,f_1;e_2,f_2;e_3,f_3)$ with
$(e_1,f_1)\prec (e_3,f_3)\prec (e_2,f_2) $ with those
sextuples with $(e_3,f_3)\prec (e_1,f_1)\prec (e_2,f_2) $,
one sees that we may equivalently count all sextuples
$(e_1,f_1;e_2,f_2;e_3,f_3)$ with
$(e_1,f_1)\preceq (e_2,f_2)\preceq (e_3,f_3) $, but not all three
equal, satisfying \eqref{eq:ef3}. In the remainder of this proof,
we shall always assume these conditions.

\medskip
{\sc Case 1:}
 $n=4^e(1+n_12+n_22^2+\dots+n_k2^k)$. 
Here, we have to consider all possible cases for solutions to \eqref{eq:ef3}
to exist: (a) $e_1=e_2=e_3=e$; (b) $0\le e_1=e<e_2=e_3$; (c)
$0\le e_1=e<e_2<e_3$; (d) $0\le e_1=e_2<e_3$. 

\smallskip
(a) Writing $n=4^eu$, we see that \eqref{eq:ef3} reduces to
\begin{equation} \label{eq:f1f2f3} 
f_1+f_2+f_3=\frac {u-3} {2},
\end{equation}
with $f_1\le f_2\le f_3$ but not all three equal. Let us denote the
number of triples $(f_1,f_2,f_3)$ with $f_1+f_2+f_3=N$, $f_1\le f_2\le
f_3$, not all three equal, by $f(N)$. In order to compute $f(N)$,
we face a classical partition problem, namely the problem of counting
all integer partitions of $N$ with at most three parts, not all of
them equal. It is then folklore (cf.\ \cite[Sec.~3.2]{AndrAF}) that,
for the generating function we have
$$
\sum_{N=0}^\infty f(N)\,z^N=\frac {1} {(1-z)(1-z^2)(1-z^3)}-\frac {1} {1-z^3}
=\frac {z+z^2-z^3} {(1-z)(1-z^2)(1-z^3)}.
$$
Modulo~2, this reduces to
$$
\sum_{N=0}^\infty f(N)\,z^N=\frac {z} {1-z^4}
\quad \quad \text{modulo }2.
$$
In other words, $f(N)\equiv1$~(mod~2) if, and only if,
$N\equiv1$~(mod~4). Returning to our problem of counting triples
satisfying \eqref{eq:f1f2f3} modulo~2, this means that the number of the above
triples is odd if, and only if, $u\equiv5$~(mod~8). Hence, a slick way to
express this number of triples modulo~2 in terms of the binary digits of $u$ is
\begin{equation} \label{eq:(a)} 
n_0 (n_1+1)n_2=(n_1+1)n_2\pmod2.
\end{equation}

\smallskip
(b) With the convention $n=4^eu$ from above, we see that we have to
count all triples $(e_2,f_2,f_3)$ with $e_2>e$ and $f_2\le f_3$ satisfying
$$
2\cdot4^{e_2-e}(f_2+f_3+1)\le u-1.
$$
Since the number of pairs $(x,y)$ of non-negative integers with $x\le y$
such that $x+y=k$ equals $\fl{(k+2)/2}$, the number of above triples
equals 
\begin{equation} \label{eq:e1FA} 
\sum_{e_2>e}\sum_{k=0}^{F} \fl{\frac {k+2} {2}}
\end{equation}
where
$$
F=\fl{\frac {u-1} {2\cdot4^{e_2-e}}}-1.
$$
Using \eqref{eq:sumk} again, 
the sum in \eqref{eq:e1FA}, when reduced modulo~2, becomes
\begin{equation}
\sum_{e_2>e}\sum_{k=0}^{F} \fl{\frac {k+2} {2}}
\equiv n_3(n_4+1)+n_5(n_6+1)+\cdots\pmod2.
\label{eq:n2n3F}
\end{equation}

\smallskip
(c) With the same notation as above,
here we have to count all quadruples\break $(e_2,f_2;e_3,f_3)$ with
$e<e_2<e_3$ satisfying
$$
4^{e_2-e}(2f_2+1)+4^{e_3-e}(2f_3+1)\le u-1.
$$
Given $e_3$ with $e_3>e$, the range of possible $f_3$'s is $0,1,\dots,F_3$,
where 
$$F_3=\fl{\frac {1} {2}\left(\frac {u-5} {4^{e_3-e}}-1\right)}.$$
Given $e_3$, $f_3$, and $e_2$ with $e<e_2<e_3$, 
the range of possible $f_2$'s is $0,1,\dots,F_2$, where 
$$F_2=\fl{\frac {1} {2}\left(\frac {u-4^{e_3-e}(2f_3+1)-1} 
{4^{e_2-e}}-1\right)}.$$
Thus, the number of above quadruples with $e_2<e_3$ equals
\begin{align}
\notag
\sum_{e_3>e}&
\sum_{f_3=0} ^{F_3}
\sum_{e_2=e+1} ^{e_3-1}
\fl{\frac {1} {2}\left(\frac {u-4^{e_3-e}(2f_3+1)-1} {4^{e_2-e}}+1\right)}\\
&=
\notag
\sum_{e_3>e}
\sum_{f_3=0} ^{F_3}
\sum_{e_2=e+1} ^{e_3-1}
\left(
\fl{\frac {1} {2}\left(\frac {u-1}
  {4^{e_2-e}}+1\right)}
-2\cdot4^{e_3-e_2-1}(2f_3+1)
\right)
\\
&\equiv
\notag
\sum_{e_3>e}
\sum_{f_3=0} ^{F_3}
\sum_{e_2=e+1} ^{e_3-1}
(n_{2e_2-2e}+n_{2e_2-2e+1})
\pmod 2\\
&\equiv
\notag
\sum_{e_3>e}
\sum_{f_3=0} ^{F_3}
(n_2+n_3+n_4+n_5+\cdots+n_{2e_3-2e-1})
\pmod 2\\
\notag
&\equiv
\sum_{e_3>e}
(F_3+1)\cdot
(n_2+n_3+n_4+n_5+\cdots+n_{2e_3-2e-1})\pmod2\\
&\equiv
\sum_{e_3>e} 
(n'_{2e_3-2e}+n'_{2e_3-2e+1})
(n_2+n_3+n_4+n_5+\cdots+n_{2e_3-2e-1})
\pmod 2,
\label{eq:(c)}
\end{align}
where $u-4=1+n'_12+n'_24+\cdots$.

\smallskip
(d) Here, Equation \eqref{eq:ef3} reduces to
\begin{equation} \label{eq:2u} 
(f_1+f_2+1)+2\cdot 4^{e_3-e_1-1}(2f_3+1)=2\cdot4^{e-e_1-1} u.
\end{equation}
From this equation it is obvious that $e_1\le e-1$.
Since the second term on the left-hand side is divisible by~$2$
because of $e_3>e_1$, the sum $f_1+f_2+1$ must also be divisible
by~$2$. We need not consider the case where $f_1+f_2+1\equiv0$~(mod~4) 
since then the number of possible pairs $(f_1,f_2)$ with $f_1\le f_2$
is even.
Thus, we may assume that $f_1+f_2+1\equiv2$~(mod~4), in which case
the number of possible pairs $(f_1,f_2)$ with $f_1\le f_2$ is odd.
Inspecting
\eqref{eq:2u} under this assumption again, we conclude that $e_3\ge e+1$.

The counting problem that remains to be solved (modulo~2) hence is
to count all pairs $(e_3,f_3)$ with $e_3\ge e+1$ satisfying
\begin{equation} \label{eq:eu2} 
4^{e_3}(2f_3+1)\le 4^{e} \cdot u.
\end{equation}
Given $e_3$, the integer $f_3$ ranges between $0$ and $F_3$, where 
$$
F_3=\fl{\frac {1} {2}\left(\frac {u} {4^{e_3-e}}-1\right)}.
$$
The number of pairs $(e_3,f_3)$ satisfying
\eqref{eq:eu2} then is
\begin{align} 
\notag
\sum_{e_3>e}(F_3+1)
&=
\sum_{e_3>e}\fl{\frac {1} {2}\left(\frac {u} {4^{e_3-e}}+1\right)}\\
\notag
&\equiv
\sum_{e_3>e}(n_{2e_3-2e}+n_{2e_3-2e+1})
\quad \quad \text{(mod 2)}\\
&\equiv
n_2+n_3+\cdots
\quad \quad \text{(mod 2)}.
\label{eq:(d)}
\end{align}
It should be noted that, since in the current case $0\le e_1\le e-1$,
we must have $e\ge1$ in order that \eqref{eq:(d)} can actually occur.
The contribution of Subcase~(d) to the final result therefore is
\begin{equation} \label{eq:(d2)} 
\chi(e\ge1)\cdot(n_2+n_3+\cdots)
\quad \quad \text{(mod 2)}.
\end{equation}

\smallskip
Adding up the individual contributions \eqref{eq:(a)},
\eqref{eq:n2n3F}, \eqref{eq:(c)}, and \eqref{eq:(d2)}, we arrive
at the right-hand side of \eqref{eq:OmE1}.

\medskip
{\sc Case 2:}
 $n=2\cdot 4^e(1+n_12+n_22^2+\dots+n_k2^k)$. 
In order to have solutions to \eqref{eq:ef3} (with 
$(e_1,f_1)\preceq (e_2,f_2)\preceq (e_3,f_3) $, but not all three
equal, as we assume throughout), we must have
$0\le e_1=e_2<e_3$. In that case, Equation~\eqref{eq:ef3} reduces to
\begin{equation} \label{eq:eu1} 
2\cdot 4^{e_1}(f_1+f_2+1)+4^{e_3}(2f_3+1)=n=2\cdot 4^e\cdot u,
\end{equation}
where $u$ is some odd number. 

Let us for the moment fix the sum of $f_1+f_2+1$ to equal~$k$, say.
If this number is divisible by~$4$, say $k=4k'$, then
the number of possible pairs $f_1,f_2$ with $f_1\le f_2$ and
$f_1+f_2+1=4k'$ is
$\fl{(4k'+1)/2}=2k'$, which is even. Since we are only interested in
numbers of solutions modulo~2, we may disregard these cases.

There are two possibilities which remain to be considered: either
\begin{equation} \label{eq:f1f2A} 
f_1+f_2+1=2u_0+1
\end{equation}
for some $u_0\ge0$, or
\begin{equation} \label{eq:f1f2B} 
f_1+f_2+1=2(2u_0+1)
\end{equation}
for some $u_0\ge0$.

\smallskip
In the first case, that is, if \eqref{eq:f1f2A} holds, then,
from \eqref{eq:ef3}, we obtain
$$
2\cdot4^{e_1}(2u_0+1)+4^{e_3}(2f_3+1)=n=2\cdot 4^e\cdot u,
$$
for some odd positive integer~$u$. Because of $e_1<e_3$, this implies
$e_1=e$. 

Given $e_3$ with $e_3>e$, the range of possible $f_3$'s is $0,1,\dots,F_3$,
where 
$$F_3=\fl{\frac {1} {2}\left(\frac {n} {4^{e_3}}-1\right)}.$$
Given $e_3>e$ and $f_3$, the numbers $e_1$ and $u_0$ are uniquely
determined (recall that $e_1=e$). Since 
the number of pairs $(f_1,f_2)$ with $f_1\le f_2$ 
satisfying \eqref{eq:f1f2A} equals 
$\fl{\frac {1} {2}\big((2u_0+1)+1\big)}=u_0+1$,
the number of solutions to \eqref{eq:ef3} which we have to consider in the
current case is
\begin{align}
\notag
\sum_{e_3>e}&
\sum_{f_3=0} ^{F_3}(u_0+1)
=
\sum_{e_3>e}
\sum_{f_3=0} ^{F_3}
\fl{\frac {1} {2}\left(\frac {n-4^{e_3}(2f_3+1)} {2\cdot
    4^{e}}+1\right)}
\\
&=
\notag
\sum_{e_3>e}
\sum_{f_3=0} ^{F_3}
\left(
\fl{\frac {1} {2}\left(\frac {n}
  {2\cdot4^{e}}+1\right)}
-4^{e_3-e-1}(2f_3+1)
\right)
\\
&\equiv
\notag
\sum_{e_3>e}
(F_3+1)
\left(
\fl{\frac {1} {2}\left(\frac {n}
  {2\cdot4^{e}}+1\right)}
-\chi(e_3=e+1)
\right)
\pmod 2\\
&\equiv
\notag
\sum_{e_3>e}
(n_{2e_3-2e-1}+n_{2e_3-2e})
\left((1+n_1)
-\chi(e_3=e+1)
\right)
\pmod 2\\
&\equiv
(n_1+n_2+n_3+\cdots)(1+n_1)-(n_1+n_2)
\pmod 2.
\label{eq:n2n3E}
\end{align}

\smallskip
In the second case, that is, if \eqref{eq:f1f2B} holds, then the number
of possible pairs $(f_1,f_2)$ with $f_1\le f_2$ satisfying
\eqref{eq:f1f2B} equals $\fl{\frac {1} {2}\big(2(2u_0+1)+1\big)}=2u_0+1$,
which is odd. Since we are only interested in the number of solutions
to \eqref{eq:ef3} modulo~2, we may therefore simply continue with
$u_0$, and determine the number of quadruples $(e_1,u_0,e_3,f_3)$
for which
$$
4^{e_1+1}(2u_0+1)+4^{e_3}(2f_3+1)=2\cdot 4^e\cdot u,
$$
for some odd positive integer~$u$. Because of $e_1<e_3$, it follows
that $e_1+1=e_3$. Thus, the above relation becomes
$$
u_0+f_3+1=4^{e_3-e}\cdot u.
$$
There is no relation between $u_0$ and $f_3$, hence the number of
pairs $(u_0,f_3)$ satisfying the above relation equals the right-hand
side, which is $4^{e_3-e}u$. This quantity is odd if, and only if,
we have $e_3=e$. Since $e_3=e_1+1\ge1$, 
the contribution of the current case is $\chi(e\ge1)$.
If we add this to the earlier contribution \eqref{eq:n2n3E}, then
we obtain the right-hand side of \eqref{eq:OmE2}.

\medskip
This completes the proof of the lemma.
\end{proof}

\section{Motzkin numbers modulo $8$}
\label{sec:Mn8}

Using the auxiliary results from Section~\ref{sec:aux} in
\eqref{eq:MM}, we are now able to provide explicit formulae for
the congruence class of~$M_n$ modulo~8 in terms of the binary digits
of~$n$. 

\begin{theorem} \label{thm:M8}
Let $n$ be a positive integer with binary expansion
$$
n=n_0+n_1\cdot2+n_2\cdot2^2+\cdots.
$$
Furthermore, let $K$ denote the least integer $\ge4$ such that
$n_K=0$. 
The Motzkin numbers $M_n$ satisfy the following congruences
modulo~$8${\em:}
$$M_n\equiv_8 \begin{cases} 
2 s_2^2(n) + 4 e_2(n) + 1,&\text{if\/ }n\equiv0~\text{\em(mod $16$),}\\
6 s_2^2(n) + 4 e_2(n) + 3,
&\text{if\/ }n\equiv1~\text{\em(mod $16$),}\\
4 s_2(n) + 6,&\text{if\/ }n\equiv2~\text{\em(mod $16$),}\\
4,&\text{if\/ }n\equiv3~\text{\em(mod $16$),}\\
6 s_2^2(n) + 4 e_2(n) + 3,&\text{if\/ }n\equiv4~\text{\em(mod $16$),}\\
2 s_2^2(n) + 4 e_2(n) + 5,&\text{if\/ }n\equiv5~\text{\em(mod $16$),}\\
6 s_2^2(n) + 4 n_4\, s_2(n) + 4 e_2(n) + 2 n_4 + 7,
&\text{if\/ }n\equiv6~\text{\em(mod $16$),}\\
2 s_2^2(n) + 4 n_4\, s_2(n) + 4 e_2(n) + 2 n_4 + 5,
&\text{if\/ }n\equiv7~\text{\em(mod $16$),}\\
2 s_2^2(n) + 4 e_2(n) + 1,&\text{if\/ }n\equiv8~\text{\em(mod $16$),}\\
6 s_2^2(n) + 4 e_2(n) + 3,
&\text{if\/ }n\equiv9~\text{\em(mod $16$),}\\
4,&\text{if\/ }n\equiv10~\text{\em(mod $16$),}\\
4 s_2(n) + 2,&\text{if\/ }n\equiv11~\text{\em(mod $16$),}\\
6 s_2^2(n) + 4 e_2(n) + 3,
&\text{if\/ }n\equiv12~\text{\em(mod $16$),}\\
2 s_2^2(n) + 4 e_2(n) + 5,&\text{if\/ }n\equiv13~\text{\em(mod $16$),}\\
(n_{K+1} + 1) (4 s_2(n) + 6), 
&\text{if\/ }n\equiv14~\text{\em(mod $16$) and $K$ is even,}\\
 2 n_{K+1} + 4 n_{K+1} s_2(n) + 2 s_2^2(n) + 4 s_2(n) + 4 e_2(n) + 
  7,\hskip-3cm\\
&\text{if\/ }n\equiv14~\text{\em(mod $16$) and $K$ is odd,}\\
6 n_{K+1} + 4 n_{K+1} s_2(n) + 4,
&\text{if\/ }n\equiv15~\text{\em(mod $16$) and $K$ is even,}\\ 
 2 n_{K+1} + 4 n_{K+1} s_2(n) + 2 s_2^2(n) + 4
 e_2(n) + 5,\hskip-1cm\\
&\text{if\/ }n\equiv15~\text{\em(mod $16$) and $K$ is odd}.
\end{cases}
$$
Here, $s_2(n)$ denotes the sum of the binary digits of~$n,$
while $e_2(n)$ denotes the number of pairs of successive digits
in the binary expansion of~$n,$ with both digits equalling~$1$.
The symbol $x\equiv_8y$ is short for $x\equiv y$~{\em(mod $8$)}.
\end{theorem}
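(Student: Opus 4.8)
The plan is to read off the coefficient of $z^n$ from the expression \eqref{eq:MM} for $M(z)$ modulo~$8$, one summand at a time, and then to collect the contributions. The right-hand side of \eqref{eq:MM} is built from six ``transcendental'' pieces --- namely $E(z^4)\,\Om(z^4)$, $\Om(z^4)$, $\frac{1}{1-z^8}\Om(z^4)$, $E(z^8)$, $E(z^4)$, and $\frac{1}{1-z}E(z^4)$, each multiplied by an explicit Laurent polynomial in $z$ --- together with one purely rational piece $\frac{P(z)}{1-z^{16}}$, where $\deg P=15$. For the rational piece, $\coef{z^n}\frac{P(z)}{1-z^{16}}$ equals the coefficient of $z^{n\bmod 16}$ in $P(z)$; this supplies the constant summand in each of the sixteen cases of the theorem. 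For the transcendental pieces I would substitute $z\mapsto z^4$ (respectively $z\mapsto z^8$) in Lemmas~\ref{lem:E}, \ref{lem:E1-z}, \ref{lem:Omz^2} and~\ref{lem:OmE}, supplemented by the elementary facts that $\coef{z^m}\Om(z^4)$ equals~$1$ exactly when $m=4^{e}(2f+1)$ with $e\ge1$ and vanishes otherwise, that $\coef{z^m}E(z^4)$ vanishes unless $4\mid m$, and that $E^2(z^4)=E(z^8)$ modulo~$2$. Multiplication by a Laurent polynomial amounts to forming a bounded integer combination of shifts $\coef{z^{n+j}}(\cdot)$; inspecting the polynomial coefficients in \eqref{eq:MM} one checks that $E(z^8)$ and the three $E(z^4)$-summands only require their lemmas modulo~$2$ or modulo~$4$ (their prefactors being divisible by~$2$ where necessary), while $\Om(z^4)$ and $\frac{1}{1-z^8}\Om(z^4)$ are needed modulo~$8$, respectively modulo~$4$, precisely at the precision the cited lemmas provide.

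The divisibility restrictions $4\mid n$ (for the $E(z^4)$- and $\Om(z^4)$-terms, after the index shifts), $8\mid n$ (for the $E(z^8)$-term), and the period-$16$ behaviour of the rational term then force the case split according to $n$ modulo~$16$. For $n\equiv r\pmod{16}$ with $r\in\{0,1,\dots,13\}$ the relevant shifted indices $(n+j)/4$, etc., land either in the ``$e=0$'' or in a ``generic'' branch of the coefficient-extraction lemmas, and the raw output is a combination of a weighted digit sum $n_2+n_3+3n_4+n_5+3n_6+\cdots$ (from Lemmas~\ref{lem:E} and~\ref{lem:Omz^2}) with the partial-digit-sum products of Lemma~\ref{lem:OmE}. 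For $n\equiv 14$ or $15\pmod{16}$ we have $n_1=n_2=n_3=1$ (and $n_0=1$ in the latter case), so a shift such as $n+2$ propagates a carry through the entire initial run of~$1$'s, up to the first~$0$ at position $K\ge4$; consequently $(n+2)/4$ has $K-2$ trailing binary zeroes, which places it in the first branch of Lemma~\ref{lem:E} when $K$ is even and in the second branch when $K$ is odd, and makes the digit $n_{K+1}$ (the bit just past the leading~$1$ of the reduced integer) enter the formula. This is the mod-$8$ counterpart of the behaviour already seen modulo~$4$ for the even congruence classes in \cite{EuLYAB}.

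It remains to convert these ``local'' digit data into the global statistics of the theorem. Here I would use the exact identities $s_2(n)=\sum_i n_i$, $e_2(n)=\sum_i n_in_{i+1}$, $\sum_{i<j}n_in_j=\binom{s_2(n)}{2}$, and their congruence consequences --- most notably $s_2^2(n)\equiv s_2(n)\pmod 2$ and, after pairing consecutive terms, a reduction of the weighted digit sum $n_2+n_3+3n_4+n_5+3n_6+\cdots$ modulo~$4$ to an expression in $s_2(n)$ and $e_2(n)$. The partial-digit-sum products $\sum_{e}(n'_{2e}+n'_{2e+1})(n_2+\cdots+n_{2e-1})$ occurring in Lemma~\ref{lem:OmE} telescope, modulo~$2$, into a $\binom{s_2(n)}{2}$-type quantity, hence contribute $s_2^2(n)$-terms; the residual digits that are not absorbed --- such as $n_4$ in the residue classes $6,7\pmod{16}$, or $n_{K+1}$ in the classes $14,15$ --- survive individually and produce the cross-terms $4n_4\,s_2(n)$, $4n_{K+1}\,s_2(n)$, and so on. Matching the resulting expression against each residue class then yields the stated congruences.

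I expect the genuine difficulty to be twofold. First, there is the sheer bookkeeping: in each of the (effectively eighteen) cases one must add up six or seven contributions modulo~$8$, keeping careful track of which lemma is invoked, to which $2$-power precision, and at which shifted index. Second, and more delicate, is pinning down the exact algebraic identities that turn the weighted digit sums and partial-sum products into $s_2(n)$, $s_2^2(n)$, $e_2(n)$ --- in particular verifying that the quadratic cross-terms assemble cleanly and that the carry analysis in the classes $14,15\pmod{16}$ produces exactly the coefficients of $n_{K+1}$ displayed in the theorem. Everything else is routine, if lengthy, computation, best cross-checked with the accompanying {\sl Mathematica} files.
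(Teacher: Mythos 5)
Your plan coincides with the paper's own (sketched) proof: extract the coefficient of $z^n$ from \eqref{eq:MM} term by term using Lemmas~\ref{lem:E}--\ref{lem:OmE} at the indicated $2$-power precisions, split into the sixteen residue classes modulo~$16$ with the carry analysis through the initial run of $1$'s (the parameter $K$ and the digit $n_{K+1}$) for the classes $14,15$, and convert the resulting digit statistics into $s_2(n)$, $s_2^2(n)$, $e_2(n)$ via the quadratic digit-sum identities, the non-$s_2/e_2$ pieces cancelling between the various contributions. This is exactly the paper's argument (which carries out the case $n\equiv15\pmod{16}$ in detail and leaves the rest to the reader), so your proposal is essentially the same approach.
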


\begin{proof}[Sketch of proof]
We have to discuss 16 different cases depending on the congruence
class of $n$ modulo~$16$. Since the arguments in all these cases
are very similar, we content ourselves with presenting the most
complex case in detail, leaving the remaining cases to the reader.

Let $n\equiv15$~(mod~16), and
let $L$ be the least integer $\ge K+2$ with $n_L\ne0$.
In other words, the binary representation of the number $n$ is of the form
\begin{equation} \label{eq:n2} 
(n)_2=
\dots\overset{L}{\overset{\downarrow}1}0\dots0n_{K+1}
\overset{K}{\overset{\downarrow}0}1\dots1
\raise-1pt\hbox to0pt{.\hss}
\raise2pt\hbox to0pt{.\hss}
\raise5pt\hbox to0pt{.\hss}
\raise8pt\hbox to0pt{.\hss}\hphantom{.}
1111.
\end{equation}
(The vertical dots separate significant digits from the four 1's on
the right making $n$ a number congruent to $15$
modulo~$16$. Otherwise, these dots may be safely ignored.)

Let first $K$ be even, say $K=2k$. For our specific~$n$, we extract the
coefficient of $z^n$ in \eqref{eq:MM}. The relevant terms are
\begin{multline} \label{eq:terms}
4z^{-1}E(z^4)\Om(z^4),\
z^{-1}\Om(z^4),\
\frac {6z^3} {1-z^8}\Om(z^4),\
\frac {2z^7} {1-z^8}\Om(z^4),\\
4z^{-1}E(z^8),\
-10z^{-1}E(z^4),\
-\frac {4} {1-z}E(z^4),\
\frac {z^{15}} {1-z^{16}}.
\end{multline}
Coefficient extraction for these terms is discussed in
Lemmas~\ref{lem:OmE}, 
\ref{lem:Omz^2},
\ref{lem:E},
and \ref{lem:E1-z},
respectively (ordered according to the order of the terms in \eqref{eq:terms}).
Adding all contributions, we obtain
\begin{multline} \label{eq:M15}
4(n_{2k+1}+1)n_{2k+2}+
4\sum_{i\ge k+1}n_{2i+1}(n_{2i+2}+1)+
4\sum_{i\ge 2k+2}n_i\\
+
4\sum_{e>k}(n'_{2e}+n'_{2e+1})(n_{2k+2}+n_{2k+3}+\dots+n_{2e-1})\\
+
1+6(2+2n_4)+2(n_4+n_5+3n_6+n_7+3n_8+n_9+\cdots)+
4(2+n_{2k+1}+2n_{2k+2})\\
+
6(1+2n_{2k+1}+n_{2k+2}+n_{2k+3}+3n_{2k+4}+n_{2k+5}+3n_{2k+6}+n_{2k+7}+\cdots)\\
+
4\sum_{e>k}(n_{2e+2}+n_{2e+3})(n_{2}+n_{3}+\dots+n_{2e+1})
+
4\sum_{i\ge1}(n_{2i+1}+1)n_{2i+2}+1,
\end{multline}
where the digits $n'_j$ are defined by
$$
(n+1)2^{-2k}-4=1+n'_{2k+1}2+n'_{2k+2}4+\cdots.
$$

In order to control the auxiliary digits $n'_j$,
we have to distinguish between $L$ (recall \eqref{eq:n2}) being
even or odd. If this is done, one sees that one may
simply drop the primes without changing the result.

Next, we observe that
$$
n_2+n_3+\dots+n_{2k+1}\equiv1+n_{2k+1}\pmod2.
$$
Using this, we may split the first sum in the last line of
\eqref{eq:M15} in the following way:
\begin{align} \notag
4\sum_{e>k}&(n_{2e+2}+n_{2e+3})(n_{2}+n_{3}+\dots+n_{2e+1})\\
\notag
&\equiv
4\sum_{e>k}(n_{2e+2}+n_{2e+3})(1+n_{2k+1})\\
\notag
&\kern2cm
+4\sum_{e>k}(n_{2e+2}+n_{2e+3})(n_{2k+2}+n_{2k+3}+\dots+n_{2e+1})
\pmod8\\
\notag
&\equiv
4(1+n_{2k+1})\sum_{i\ge 2k+4}n_i
+4\sum_{e>k}(n_{2e+2}+n_{2e+3})(n_{2k+2}+n_{2k+3}+\dots+n_{2e+1})\\
&\kern12cm
\pmod8.
\label{eq:n'n}
\end{align}
The last sum in this expression cancels with the sum in the
second line of \eqref{eq:M15}, when reduced modulo~$8$, except
for the term for $e=k+1$.

A similar partial cancellation, modulo~$8$, takes place between the
big terms in parentheses in the third and fourth lines of
\eqref{eq:M15}. 

For the first sum in the first line and the last sum in the last
line, we observe that
\begin{align*} 
4\sum_{i\ge k+1}&n_{2i+1}(n_{2i+2}+1)
+4\sum_{i\ge1}(n_{2i+1}+1)n_{2i+2}\\
&=
4\sum_{i\ge k+1}n_{2i+1}n_{2i+2}
+4\sum_{i\ge k+1}n_{2i+1}
+4\sum_{i\ge1}n_{2i+1}n_{2i+2}
+4\sum_{i\ge1}n_{2i+2}\\
&\equiv
4\sum_{i=1}^kn_{2i+1}n_{2i+2}
+4\sum_{i\ge 2k+2}n_{i}
+4\sum_{i=1}^kn_{2i+2}
\pmod8\\
&\equiv
4(k-2+n_{2k+1}n_{2k+2})
+4\sum_{i\ge 2k+2}n_{i}
+4(k-2+n_{2k+2})
\pmod8\\
&\equiv
4(n_{2k+1}+1)n_{2k+2}
+4\sum_{i\ge 2k+2}n_{i}
\pmod8.\\
\end{align*}
After further simplification, one obtains the claimed expression
$6 n_{K+1} + 4 n_{K+1} s_2(n) + 4$.

\medskip
Now let $K$ be odd, say $K=2k+1$. The relevant terms from which one
extracts the coefficient of $z^n$ are again the ones in
\eqref{eq:terms}. However, the relevant cases 
in Lemmas~\ref{lem:OmE} and \ref{lem:E} are not the same.
More precisely, here we obtain
\begin{multline} \label{eq:M15B}
4+4n_{2k+2}+4n_{2k+3}+4(n_{2k+2}+1)\sum_{i\ge 2k+2}n_i\\
+0
+ 6(2+2n_4)+2(n_4+n_5+3n_6+n_7+3n_8+n_9+\cdots)\\
+
4(1+2n_{2k+2}+n_{2k+3}+n_{2k+4}+3n_{2k+5}+n_{2k+6}+3n_{2k+7}+n_{2k+8}+\cdots)\\
+
6(2+n_{2k+2}+2n_{2k+3})+
4\sum_{e>k}(n_{2e+2}+n_{2e+3})(n_{2}+n_{3}+\dots+n_{2e+1})\\
+
4\sum_{i\ge1}(n_{2i+1}+1)n_{2i+2}+1.
\end{multline}
This expression can be simplified in much the same way as
\eqref{eq:M15}; with one notable difference though. The sums
in the next-to-last and the last lines have no counterparts
(as opposed to the situation in \eqref{eq:M15}). Consequently,
they need a different treatment. 

The sum in the next-to-last line of \eqref{eq:M15B} is first
transformed in the same way as before, see \eqref{eq:n'n}.
Then one observes that
\begin{align*} 
4\sum_{e>k}&(n_{2e+2}+n_{2e+3})(n_{2k+2}+n_{2k+3}+\dots+n_{2e+1})\\
&=4\sum_{i>j\ge 2k+2}n_in_j-4\sum_{i\ge k+1}n_{2i}n_{2i+1}\\
&=2\bigg(\sum_{i\ge 2k+2}n_i\bigg)^2-
2\sum_{i\ge 2k+2}n_i-4\sum_{i\ge k+1}n_{2i}n_{2i+1}\\
&=2\bigg(s_2(n)-\sum_{i=0}^{2k+1}n_i\bigg)^2-
2\sum_{i\ge 2k+2}n_i
-4\sum_{i\ge k+1}n_{2i}n_{2i+1}\\
&\equiv2s_2^2(n)-4n_{2k+1}s_2(n)+2n_{2k+1}-
2\sum_{i\ge 2k+2}n_i
-4\sum_{i\ge k+1}n_{2i}n_{2i+1}
\pmod8.
\end{align*}
The last sum combines with the sum $4\sum_{i\ge1}n_{2i+1}n_{2i+2}$
appearing in the last line of \eqref{eq:M15B} into $4e_2(n)$, up to some
error that can be computed explicitly modulo~$8$. 
We leave the remaining simplifications, leading to the claimed
expression 
$$ 2 n_{2k+2} + 4 n_{2k+2} s_2(n) + 2 s_2^2(n) + 4
 e_2(n) + 5,$$ 
to the reader.
\end{proof}

The following result of Eu, Liu and Yeh \cite{EuLYAB}, characterising
even congruence classes of~$M_n$ modulo~$8$, can now readily be
obtained through a straightforward case-by-case analysis using the
corresponding cases from Theorem~\ref{thm:M8}.

\begin{corollary} \label{thm:EuLYAB}
The Motzkin number $M_n$ is even if, and only if, 
$n = (4i + \ep)4^{j+1} -\de$ 
for non-negative integers $i, j$, $\ep = 1, 3$, and 
$\de = 1, 2$. Moreover, we have
$$
M_n\equiv_8\begin{cases} 
4,&\text{if\/ }(\ep,\de)=(1,1)\text{ or }(3,2),\\
4y+2,&\text{if\/ }(\ep,\de)=(1,2)\text{ or }(3,1),
\end{cases}
$$
where $y$ is the number of\/ $1$'s in the binary expansion of\/
$4i+\ep-1$.
\end{corollary}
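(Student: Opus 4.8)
The plan is to derive Corollary~\ref{thm:EuLYAB} purely as a bookkeeping exercise from Theorem~\ref{thm:M8}. The key observation is that the parametrisation $n = (4i+\ep)4^{j+1}-\de$ with $\ep\in\{1,3\}$, $\de\in\{1,2\}$, can be matched directly against the sixteen residue classes of $n$ modulo $16$ listed in Theorem~\ref{thm:M8}. First I would translate each of the four $(\ep,\de)$-shapes into a statement about the binary digits of $n$: writing $4i+\ep$ in binary and then shifting by $2(j+1)$ places and subtracting $\de$, one sees exactly which low-order digits of $n$ are forced. For instance, $(\ep,\de)=(1,1)$ forces the last $2(j+1)$ binary digits of $n$ to be $0$ and the preceding digit to be $1$, so that $n\equiv 3\pmod 8$ but the behaviour modulo $16$ depends on whether $j=0$ (giving $n\equiv 3\pmod{16}$) or $j\ge 1$ (giving $n\equiv 11\pmod{16}$); similarly for the other three shapes. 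Conversely, I would check that the classes $n\equiv 0,1,4,5,8,9,12,13\pmod{16}$ never arise from this parametrisation (these are precisely the odd-$M_n$ classes), and that $n\equiv 2,3,6,7,10,11,14,15\pmod{16}$ each correspond to a unique $(\ep,\de)$ together with a parity condition on $j$ (equivalently, a parity condition on $K$ in the cases $n\equiv 6,7,14,15$).

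Once the dictionary between $(\ep,\de,j)$ and the mod-$16$ classes (with the auxiliary data $K$, $n_4$, $n_{K+1}$) is set up, the next step is to specialise the corresponding formulae of Theorem~\ref{thm:M8} and simplify. In the cases $n\equiv 3,10\pmod{16}$ the formula is already the constant $4$, matching $(\ep,\de)=(1,1)$ and $(3,2)$ immediately. In the cases $n\equiv 2,11\pmod{16}$ one gets $4s_2(n)+6$ and $4s_2(n)+2$ respectively; here I would use that the low-order digits of $n$ forced by the parametrisation are all $0$ except a single $1$, so that $s_2(n) = s_2(4i+\ep) = y+1$ where $y = s_2(4i+\ep-1)$, and check that $4(y+1)+6\equiv 4y+2\pmod 8$ and $4(y+1)+2\equiv 4y+6\equiv\ldots$—so some care with the exact relation between $s_2(n)$ and $y$ is needed, since subtracting $\de$ may or may not change the digit sum; this is where the $(\ep,\de)=(1,2)$ versus $(3,1)$ distinction (borrow versus no borrow) has to be tracked. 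The genuinely involved cases are $n\equiv 6,7,14,15\pmod{16}$, where the Theorem~\ref{thm:M8} formulae involve $s_2^2(n)$, $e_2(n)$, $n_4$, and $n_{K+1}$. Here I would argue that, because the forced low-order block of $n$ is a string of the shape $\cdots n_{K+1}\,0\,1\cdots1$ with at most two nonzero digits among the positions $\le K+1$, the quantities $s_2^2(n)$ and $e_2(n)$ reduce modulo $8$ to linear expressions in $s_2(n)$ and $n_{K+1}$ (using $s_2(n)^2\equiv s_2(n)\pmod 2$ and analysing carry structure), collapsing the formula to the stated $4y+2$ or $4$.

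The main obstacle I expect is precisely this last simplification: showing that the apparently quadratic expressions $2s_2^2(n)$, $4n_{K+1}s_2(n)$, $4e_2(n)$ collapse, modulo $8$, to the clean linear form $4y+2$, and that the various cases (e.g.\ $n\equiv 6$ with $K$ even vs.\ $K$ odd) all feed into the {\it same} final answer once the parametrisation is imposed. Concretely, when $n$ is constrained to the parametrised form, there is a tight relation between $K$, $j$, and the digits of $4i+\ep$: the block of $1$'s at the bottom has length determined by $\de$ and $\ep$, and $K$ is then forced (it is $2j+2$ or $2j+3$ or similar), which in turn pins down the parity of $K$ in terms of $j$ and hence decides which sub-case of Theorem~\ref{thm:M8} applies. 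Verifying that in each valid combination the messy formula evaluates to the advertised $4$ or $4y+2$ is a finite but delicate check; the rest is routine. I would therefore organise the proof as: (i) the digit dictionary; (ii) a table matching each of the sixteen residues to $(\ep,\de)$ or to ``$M_n$ odd''; (iii) case-by-case evaluation, grouping the eight even residues into the four shapes and performing the simplification, flagging the $n\equiv 6,7,14,15$ cases as the ones requiring the digit-sum identities.
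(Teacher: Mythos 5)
Your top-level strategy --- reading the corollary off from Theorem~\ref{thm:M8} by a case-by-case analysis of the residue of $n$ modulo~$16$ --- is exactly the paper's route. However, the concrete dictionary you propose contains errors that would derail the execution. For $(\ep,\de)=(1,1)$ we have $n=(4i+1)4^{j+1}-1=4i\cdot4^{j+1}+(4^{j+1}-1)$, so the last $2(j+1)$ binary digits of $n$ are \emph{ones} (not zeros), preceded by the two zeros of $4i$; hence $n\equiv3\pmod{16}$ for $j=0$ and $n\equiv15\pmod{16}$ for $j\ge1$ (with $K=2j+2$ even and $n_{K+1}=0$), not $n\equiv11\pmod{16}$, and not $n\equiv3\pmod8$. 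The correct matching is: $(1,1)$ gives $3$, or $15$ with $K$ even and $n_{K+1}=0$; $(3,1)$ gives $11$, or $15$ with $K$ even and $n_{K+1}=1$; $(1,2)$ gives $2$, or $14$ with $K$ even and $n_{K+1}=0$; $(3,2)$ gives $10$, or $14$ with $K$ even and $n_{K+1}=1$. Relatedly, your claim that $n\equiv6,7\pmod{16}$ arise from the parametrisation and that the odd-$M_n$ classes are exactly $0,1,4,5,8,9,12,13\pmod{16}$ is false: since $n+\de=(4i+\ep)4^{j+1}$ forces the number of trailing binary zeros of $n+\de$ to be even and at least~$2$, the classes $6,7\pmod{16}$ (where $n+2$, resp.\ $n+1$, has exactly three trailing zeros) never occur, and neither do $14,15\pmod{16}$ with $K$ odd; by Theorem~\ref{thm:M8} precisely these excluded cases (together with $0,1,4,5,8,9,12,13$) give odd $M_n$, which is what the ``only if'' direction requires.

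As a consequence, the ``main obstacle'' you anticipate is not the real one: in the cases that actually arise ($n\equiv2,3,10,11\pmod{16}$, and $n\equiv14,15\pmod{16}$ with $K$ even) the formulae of Theorem~\ref{thm:M8} contain no $s_2^2(n)$, $e_2(n)$ or $n_4$ terms at all; they are $4$, $4s_2(n)+6$, $4s_2(n)+2$, $(n_{K+1}+1)(4s_2(n)+6)$ and $6n_{K+1}+4n_{K+1}s_2(n)+4$. The only arithmetic needed is the relation between $s_2(n)$ and $y$: since $n$ consists of the binary string of $4i+\ep-1$ followed by $2j+2$ ones (if $\de=1$), respectively $2j+1$ ones and a zero (if $\de=2$), one has $s_2(n)=y+2j+2$, respectively $s_2(n)=y+2j+1$, and $n_{K+1}$ records whether $\ep=1$ or $\ep=3$; substituting and discarding multiples of~$8$ yields $4$ for $(1,1),(3,2)$ and $4y+2$ for $(1,2),(3,1)$. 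So the plan is salvageable, but the digit dictionary, the identification of the odd classes, and the list of genuinely occurring residues must all be corrected before the case-by-case evaluation can be carried out.
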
 

Moreover, Theorem~\ref{thm:M8} allows us to provide a characterisation
of all those~$n$ for which $M_n$ lies in {\it any} specified
congruence class modulo~$8$. As an example, we give the
characterisation for the congruence class~1.

\begin{corollary} \label{cor:Mn1}
The Motzkin number $M_n$ is congruent to~$1$ modulo~$8$ if, and only
if, $n$ satisfies one of the following conditions:

\begin{enumerate} 
\item $n\equiv0$~{\em(mod~$16$)} and
  $s_2(n)\equiv e_2(n)\equiv0$~{\em(mod~$2$);}
\item $n\equiv1$~{\em(mod~$16$),}
  $s_2(n)\equiv1$~{\em(mod~$2$),} and $e_2(n)\equiv0$~{\em(mod~$2$);}
\item $n\equiv4$~{\em(mod~$16$),}
  $s_2(n)\equiv1$~{\em(mod~$2$),} and $e_2(n)\equiv0$~{\em(mod~$2$);}
\item $n\equiv5$~{\em(mod~$16$),}
  $s_2(n)\equiv0$~{\em(mod~$2$),} and $e_2(n)\equiv1$~{\em(mod~$2$);}
\item $n\equiv6$~{\em(mod~$16$),} $n_4=0,$
  $s_2(n)\equiv e_2(n)\equiv1$~{\em(mod~$2$);}
\item $n\equiv6$~{\em(mod~$16$),} $n_4=1,$
  $s_2(n)\equiv e_2(n)\equiv0$~{\em(mod~$2$);}
\item $n\equiv7$~{\em(mod~$16$),} $n_4=0,$
  $s_2(n)\equiv0$~{\em(mod~$2$),} and $e_2(n)\equiv1$~{\em(mod~$2$);}
\item $n\equiv7$~{\em(mod~$16$),} $n_4=1,$
  $s_2(n)\equiv e_2(n)\equiv1$~{\em(mod~$2$);}
\item $n\equiv8$~{\em(mod~$16$),}
  $s_2(n)\equiv e_2(n)\equiv0$~{\em(mod~$2$);}
\item $n\equiv9$~{\em(mod~$16$),}
  $s_2(n)\equiv1$~{\em(mod~$2$),} and $e_2(n)\equiv0$~{\em(mod~$2$);}
\item $n\equiv12$~{\em(mod~$16$),}
  $s_2(n)\equiv1$~{\em(mod~$2$),} and $e_2(n)\equiv0$~{\em(mod~$2$);}
\item $n\equiv13$~{\em(mod~$16$),}
  $s_2(n)\equiv0$~{\em(mod~$2$),} and $e_2(n)\equiv1$~{\em(mod~$2$);}
\item $n\equiv14$~{\em(mod~$16$),} $K$ odd, $n_{K+1}=0,$
  $s_2(n)\equiv e_2(n)\equiv1$~{\em(mod~$2$);}
\item $n\equiv14$~{\em(mod~$16$),} $K$ odd, $n_{K+1}=1,$
  $s_2(n)\equiv e_2(n)\equiv0$~{\em(mod~$2$);}
\item $n\equiv15$~{\em(mod~$16$),} $K$ odd, $n_{K+1}=0,$
  $s_2(n)\equiv0$~{\em(mod~$2$),} and $e_2(n)\equiv1$~{\em(mod~$2$);}
\item $n\equiv15$~{\em(mod~$16$),} $K$ odd, $n_{K+1}=1,$
  $s_2(n)\equiv e_2(n)\equiv1$~{\em(mod~$2$);}
\end{enumerate}
where $s_2(n),$ $e_2(n),$ and $K$ are defined as in Theorem~{\em\ref{thm:M8}}.
\end{corollary}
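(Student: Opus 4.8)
The plan is to deduce the characterisation directly from Theorem~\ref{thm:M8} by a case-by-case analysis according to the residue of~$n$ modulo~$16$, using exactly the subdivisions (into $K$ even or odd, respectively $n_4=0$ or~$1$) that already occur there. In each of the resulting cases, Theorem~\ref{thm:M8} expresses $M_n$ modulo~$8$ as an explicit integer combination of~$1$, $s_2(n)$, $s_2^2(n)$, $e_2(n)$, $n_4$, $n_{K+1}$ and products thereof; the task is then simply to determine, in each case, for which values of these quantities the given expression equals~$1$ modulo~$8$.

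Before beginning, I would record the elementary fact that $m^2\equiv0$ or~$1$ modulo~$4$ according as the integer~$m$ is even or odd; in particular the value of $s_2^2(n)$ modulo~$4$ is governed by the parity of $s_2(n)$, while $2e_2(n)$ modulo~$4$ lies in $\{0,2\}$. With these observations, each congruence ``the Theorem~\ref{thm:M8} expression $\equiv1$~(mod~$8$)'' collapses to a short finite check over the parities of $s_2(n)$ and $e_2(n)$ (together with the values of $n_4$ or $n_{K+1}$ where they appear). For example, for $n\equiv0$~(mod~$16$) the condition $2s_2^2(n)+4e_2(n)+1\equiv1$~(mod~$8$) is equivalent to $s_2^2(n)+2e_2(n)\equiv0$~(mod~$4$), which by the above holds exactly when $s_2(n)$ and $e_2(n)$ are both even, giving item~(1); and for $n\equiv6$~(mod~$16$) the analysis splits on $n_4$ and produces precisely items~(5) and~(6). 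One carries this out for all sixteen residues. For $n\equiv2,3,10,11$~(mod~$16$), as well as for $n\equiv14$ and $n\equiv15$~(mod~$16$) when $K$ is even, the expression of Theorem~\ref{thm:M8} is manifestly always congruent to~$2$, $3$, $4$, or~$6$ modulo~$8$, never to~$1$; these are precisely the cases absent from the list, which is why every item of the corollary places~$n$ in one of the remaining residue classes.

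There is no genuine obstacle here: the argument is a mechanical, though somewhat lengthy, verification powered entirely by Theorem~\ref{thm:M8} and the trivial remark about squares modulo~$4$. The only points needing mild care are bookkeeping ones --- correctly matching the subdivisions $K$ even/odd and $n_4=0/1$ in the corollary with the corresponding branches of Theorem~\ref{thm:M8}, and noting that whenever a residue class forces $M_n$ into an even congruence class it contributes nothing to the enumeration. Since the sixteen cases are all of the same elementary character, in the writeup I would present a couple of representative ones (say $n\equiv0$ and $n\equiv6$~(mod~$16$)) in full detail and leave the remainder to the reader, in the same spirit as the proof of Theorem~\ref{thm:M8}.
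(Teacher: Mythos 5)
Your proposal is correct and follows essentially the same route as the paper, which obtains the corollary precisely by a mechanical case-by-case reduction of the sixteen formulas of Theorem~\ref{thm:M8}, using that $s_2^2(n)\bmod 4$ depends only on the parity of $s_2(n)$. (One trivial remark: in the excluded residue classes the expressions of Theorem~\ref{thm:M8} are in fact always \emph{even} --- congruent to $2$, $4$ or $6$ modulo~$8$, never $3$ --- but this does not affect your conclusion that they are never congruent to~$1$.)
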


It should be observed that it is straightforward to generate all
possible~$n$ in any of the 16~cases in the characterisation of the
previous corollary. 
Characterisations similar to the one in Corollary~\ref{cor:Mn1} 
are available for all other congruence classes modulo~$8$.

We leave it as an open problem whether a description for the odd
congruence classes of~$M_n$ modulo~$8$ exists which is
comparably compact as the one of Eu, Liu and Yeh for the even ones,
given here in Corollary~\ref{thm:EuLYAB}. We do not hide
that we are very sceptical about this.

\section{Further applications}
\label{Sec:FurthApp}

In this final section, we show that the same approach that we
applied in Sections~\ref{sec:Motzkin}--\ref{sec:Mn8} to Motzkin numbers
also works for the sequences of Motzkin prefix numbers, of Riordan
numbers, of hex tree numbers, and of central trinomial coefficients.
Since the arguments are completely analogous, we content ourselves
with brief sketches of the main points and subsequent statements
of the corresponding results for congruences modulo~$8$.

\subsection{Motzkin prefix numbers modulo $8$}

Let $MP_n$ be the $n$-th {\it  Motzkin prefix number}, that is, the number of
lattice paths from $(0,0)$ consisting of $n$ steps taken from
the set
$\{(1,0),(1,1),(1,-1)\}$ never running below the $x$-axis.
Gouyou-Beauchamps and Viennot \cite{GoViAA} have shown that
$MP_n$ also counts directed rooted animals with $n + 1$ vertices.
It is well-known (cf.\ e.g.\ \cite[Sec.~8]{KrMuAE})
that the generating function $MP(z)=\sum_{n\ge0}MP_n\,z^n$ satisfies the
functional equation
\begin{equation} \label{eq:MotzkinprefEF}
z(1-3z)MP^2(z)+(1-3z)MP(z)-1=0.
\end{equation}

When we apply our method from Section~\ref{sec:method},
the Ansatz for the base step is
\begin{equation} \label{eq:Ansatz-MP} 
MP_1(z)=\frac {1} {z}\Om^{2^\al}(z^4)+\sum_{k=0}^{\al+1}\frac {z^{2^k-1}}
{(1-z)^{2^k}}. 
\end{equation}
Subsequently, everything runs through in the same way as in
the proof of Theorem~\ref{thm:Motzkin}. Consequently, 
the generating function $MP(z)$ for Motzkin prefix numbers 
satisfies a completely analogous theorem.

If we now follow the line of argument in Section~\ref{sec:M8},
then we obtain that $MP(z)$ admits the following representation
modulo~$8$:
\begin{multline} \label{eq:MP}
MP(z)=\frac{4 E(z^4) \Om(z^4)}{z}+\left(\frac{1}{z}+\frac{4 z^7+2
   z^6+2 z^4+2 z^2+4
   z+2}{1-z^8}\right)
   \Om(z^4)\\
+\frac{4 E(z^8)}{z}+
   \left(\frac{6}{z}+\frac{4}{1-z^2}\right
   )E(z^4)\\
+\frac{\begin{matrix}6 z^{15}+z^{14}+z^{12}+4
   z^{11}+5 z^{10}+6 z^9+z^8\kern3cm\\\kern3cm +6 z^7+z^6+4
   z^5+z^4+4 z^3+5 z^2+2 z+1\end{matrix}}{1-z^{16}}
\quad \quad \text{modulo }8.
\end{multline}

Coefficient extraction using Lemmas~\ref{lem:E}--\ref{lem:OmE}
leads to the following theorem generalising \cite[Cor.~3.2]{DeSaAA}.

\begin{theorem} \label{thm:MP8}
Let $n$ be a positive integer with binary expansion
$$
n=n_0+n_1\cdot2+n_2\cdot2^2+\cdots.
$$
The Motzkin prefix numbers $MP_n$ satisfy the following congruences
modulo~$8${\em :}
$$MP_n\equiv_8 \begin{cases} 
2 s_2^2(n) + 4 e_2(n) + 1,&\text{if\/ }n\equiv0~\text{\em(mod $16$),}\\
4 s_2(n) + 6,
&\text{if\/ }n\equiv1~\text{\em(mod $16$),}\\
6 s_2^2(n) + 4 e_2(n) + 7,&\text{if\/ }n\equiv2~\text{\em(mod $16$),}\\
2 s_2^2(n) + 4 e_2(n) + 1,&\text{if\/ }n\equiv3~\text{\em(mod $16$),}\\
2 s_2^2(n) + 4 e_2(n) + 1,&\text{if\/ }n\equiv4~\text{\em(mod $16$),}\\
0,&\text{if\/ }n\equiv5~\text{\em(mod $16$),}\\
6 s_2^2(n) + 4 e_2(n) + 7,
&\text{if\/ }n\equiv6~\text{\em(mod $16$),}\\
4 s_2(n) + 2 n_4 + 4 n_4 s_2(n) + 2,
&\text{if\/ }n\equiv7~\text{\em(mod $16$),}\\
2 s_2^2(n) + 4 e_2(n) + 1,&\text{if\/ }n\equiv8~\text{\em(mod $16$),}\\
4 s_2(n) + 6,
&\text{if\/ }n\equiv9~\text{\em(mod $16$),}\\
6 s_2^2(n) + 4 e_2(n) + 7,&\text{if\/ }n\equiv10~\text{\em(mod $16$),}\\
6 s_2^2(n) + 4 e_2(n) + 7,&\text{if\/ }n\equiv11~\text{\em(mod $16$),}\\
2 s_2^2(n) + 4 e_2(n) + 1,
&\text{if\/ }n\equiv12~\text{\em(mod $16$),}\\
0,&\text{if\/ }n\equiv13~\text{\em(mod $16$),}\\
6 s_2^2(n) + 4 e_2(n) + 7, 
&\text{if\/ }n\equiv14~\text{\em(mod $16$),}\\
2 s_2^2(n) + 4 e_2(n) + 6 n_{K+1} + 4 n_{K+1} s_2(n) + 1,\hskip-2cm\\
&\text{if\/ }n\equiv15~\text{\em(mod $16$) and $K$ is even,}\\ 
4 s_2(n) + 2 n_{K+1} + 4 n_{K+1} s_2(n) + 2,\hskip-1cm\\
&\text{if\/ }n\equiv15~\text{\em(mod $16$) and $K$ is odd},
\end{cases}
$$
where $s_2(n),$ $e_2(n),$ and $K$ are defined as in Theorem~{\em\ref{thm:M8}}.
\end{theorem}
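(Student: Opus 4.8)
The proof runs in complete parallel to that of Theorem~\ref{thm:M8}; only the input data change, so I describe the plan rather than carry out all sixteen cases. The first stage is to establish the representation \eqref{eq:MP} of $MP(z)$ modulo~$8$. Exactly as for Motzkin numbers, one first checks that \eqref{eq:MotzkinprefEF} determines $MP(z)$ uniquely modulo any power of~$2$ (the coefficient of $MP(z)$ being the unit $1-3z$), so that the algorithm of the proof of Theorem~\ref{thm:Motzkin} applies verbatim — with the base-step Ansatz \eqref{eq:Ansatz-MP} in place of $M_1(z)$, whose verification modulo~$2$ is the same kind of one-line computation as for $M_1(z)$. Running it with $\al=2$ yields a polynomial expression for $MP(z)$ modulo~$16$ in $\Om(z^4)$ with Laurent-polynomial coefficients; one then lowers the degree in $\Om(z^4)$ by means of \eqref{eq:N} with $N=3$, reduces further using \eqref{eq:OmE} at the cost of the error series $E$, and finally reduces coefficients modulo~$8$, using $E^2(z^4)=E(z^8)$ modulo~$2$. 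This is a finite, mechanical computation and produces \eqref{eq:MP}.

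The second, substantive stage is coefficient extraction from \eqref{eq:MP}. One splits into sixteen cases according to $n$ modulo~$16$, and in each case reads off the terms of \eqref{eq:MP} that can contribute to $\coef{z^n}MP(z)$. After writing $z^{-1}\Om(z^4)$ and the $(1-z^8)^{-1}$-terms as Laurent-polynomial multiples of $\frac{1}{1-z^8}\Om(z^4)$ (which is $\frac{1}{1-z^2}\Om(z)$ evaluated at $z\mapsto z^4$), these contributions are precisely of the types handled by Lemma~\ref{lem:OmE} (for $E(z^4)\Om(z^4)$), Lemma~\ref{lem:Omz^2} (for $\frac{1}{1-z^2}\Om(z)$), Lemma~\ref{lem:E} (for $E(z^4)$ and $E(z^8)$) and Lemma~\ref{lem:E1-z} (for $\frac{1}{1-z}E(z^4)$), while the rational tail $\frac{\text{polynomial}}{1-z^{16}}$ contributes a periodic constant by the binomial theorem. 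Summing, one obtains in each residue class an expression in the binary digits $n_i$ of~$n$ and in the digits $n'_i$ of certain shifted numbers of the form $n\cdot 4^{-e}-4$. Finally one simplifies using the elementary identities $\sum_{i\ge M}n_i=s_2(n)-\sum_{i=0}^{M-1}n_i$, $\bigl(\sum_{i\ge M}n_i\bigr)^2=\sum_{i\ge M}n_i+2\sum_{i>j\ge M}n_in_j$, and $e_2(n)=\sum_{i\ge0}n_{2i}n_{2i+1}+\sum_{i\ge0}n_{2i+1}n_{2i+2}$ (the last up to explicitly computable boundary terms), together with the partial cancellations between the several double sums that appear, exactly as in \eqref{eq:n'n} and the displays around it. This yields the closed forms in the statement; reducing modulo~$2$ recovers the parity result \cite[Cor.~3.2]{DeSaAA} of Deutsch and Sagan.

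The main obstacle is, as for Theorem~\ref{thm:M8}, the case $n\equiv15$ modulo~$16$ with $K$ odd. Here one must track the auxiliary digits $n'_i$ coming from the arguments $E(z^4)$ and $E(z^8)$ — which forces a secondary split according to whether the least index $L\ge K+2$ with $n_L\ne0$ is even or odd, after which one verifies that the primes may be dropped without altering the result — and one must keep careful account of which of the many double sums cancel against which. In contrast to the Motzkin case, some of these sums have no partner and must instead be rewritten, via the square identity above, into an $s_2^2(n)$-term plus corrections computable modulo~$8$. Once this bookkeeping is under control all sixteen cases close, the remaining fifteen being strictly easier than $n\equiv15$ modulo~$16$.
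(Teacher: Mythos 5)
Your proposal coincides with the paper's own (sketched) proof: establish the mod-$8$ representation \eqref{eq:MP} by running the algorithm from the proof of Theorem~\ref{thm:Motzkin} with the base-step Ansatz \eqref{eq:Ansatz-MP} and then reducing via \eqref{eq:N}, \eqref{eq:OmE} and $E^2(z^4)=E(z^8)$ modulo~$2$, and afterwards extract coefficients case by case modulo~$16$ using Lemmas~\ref{lem:E}--\ref{lem:OmE}, simplifying with the same digit-sum identities and cancellations as in the proof of Theorem~\ref{thm:M8}. This is essentially the same approach as in the paper, carried out at the same level of detail, so nothing needs to be added.
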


\subsection{Riordan numbers modulo $8$}

Let $R_n$ be the $n$-th {\it Riordan number}, that is, the number of
lattice paths from $(0,0)$ to $(n,0)$ consisting of steps taken from
the set
$\{(1,0),(1,1),(1,-1)\}$ never running below the $x$-axis, and where
steps $(1,0)$ are not allowed on the $x$-axis.
It is well-known (cf.\ e.g.\ \cite[Sec.~9]{KrMuAE})
that the generating function $R(z)=\sum_{n\ge0}R_n\,z^n$ satisfies the
functional equation
\begin{equation} \label{eq:RiordanEF}
z(1+z)R^2(z)-(z+1)R(z)+1=0.
\end{equation}

When we apply our method from Section~\ref{sec:method},
the Ansatz for the base step is the same as the one in the case of
Motzkin prefix numbers, that is, 
the right-hand side of \eqref{eq:Ansatz-MP}.
Subsequently, everything runs through in the same way as in
the proof of Theorem~\ref{thm:Motzkin}. Consequently, 
the generating function $R(z)$ for Riordan numbers 
satisfies a completely analogous theorem.

If we now follow the line of argument in Section~\ref{sec:M8},
then we obtain that $R(z)$ admits the following representation
modulo~$8$:
\begin{multline} \label{eq:R}
R(z)=
\frac{4 E(z^4)\Om(z^4)}{z} +\left(\frac{7}{z}+\frac{2 z^6+4
   z^5+2 z^4+4 z^3+2
   z^2+2}{1-z^8}\right)\Om(z^4) \\
+\frac{4 E(z^8)
   }{z}+
   \left(\frac{2}{z}+\frac{4}{1-z^2}\right)E(z^4)\\
   +\frac{\begin{matrix}5 z^{14}+2 z^{13}+z^{12}+2
   z^{11}+z^{10}+4 z^9+z^8\kern3cm\\\kern5cm +5 z^6+6
   z^5+z^4+2 z^3+z^2+1\end{matrix}}{1-z^{16}}
\quad \quad \text{modulo }8.
\end{multline}

Coefficient extraction using Lemmas~\ref{lem:E}--\ref{lem:OmE}
leads to the following theorem generalising \cite[Cor.~3.3]{DeSaAA}.

\begin{theorem} \label{thm:R8}
Let $n$ be a positive integer with binary expansion
$$
n=n_0+n_1\cdot2+n_2\cdot2^2+\cdots.
$$
The Riordan numbers $R_n$ satisfy the following congruences
modulo~$8${\em :}
$$R_n\equiv_8 \begin{cases} 
2 s_2^2(n) + 4 e_2(n) + 1,&\text{if\/ }n\equiv0~\text{\em(mod $16$),}\\
0,
&\text{if\/ }n\equiv1~\text{\em(mod $16$),}\\
6 s_2^2(n) + 4 e_2(n) + 3,&\text{if\/ }n\equiv2~\text{\em(mod $16$),}\\
2 s_2^2(n) + 4 e_2(n) + 5,&\text{if\/ }n\equiv3~\text{\em(mod $16$),}\\
2 s_2^2(n) + 4 e_2(n) + 1,&\text{if\/ }n\equiv4~\text{\em(mod $16$),}\\
4 s_2(n) + 6,&\text{if\/ }n\equiv5~\text{\em(mod $16$),}\\
6 s_2^2(n) + 4 e_2(n) + 3,
&\text{if\/ }n\equiv6~\text{\em(mod $16$),}\\
6 n_4 + 4 n_4 s_2(n) + 4,
&\text{if\/ }n\equiv7~\text{\em(mod $16$),}\\
2 s_2^2(n) + 4 e_2(n) + 1,&\text{if\/ }n\equiv8~\text{\em(mod $16$),}\\
0,
&\text{if\/ }n\equiv9~\text{\em(mod $16$),}\\
6 s_2^2(n) + 4 e_2(n) + 3,&\text{if\/ }n\equiv10~\text{\em(mod $16$),}\\
6 s_2^2(n) + 4 e_2(n) + 7,&\text{if\/ }n\equiv11~\text{\em(mod $16$),}\\
2 s_2^2(n) + 4 e_2(n) + 1,
&\text{if\/ }n\equiv12~\text{\em(mod $16$),}\\
4 s_2(n) + 6,&\text{if\/ }n\equiv13~\text{\em(mod $16$),}\\
6 s_2^2(n) + 4 e_2(n) + 3, 
&\text{if\/ }n\equiv14~\text{\em(mod $16$),}\\
  2 s_2^2(n) + 4 e_2(n) + 2 n_{K+1} + 4 n_{K+1} s_2(n) + 5,\hskip-2cm\\
&\text{if\/ }n\equiv15~\text{\em(mod $16$) and $K$ is even,}\\ 
6 n_{K+1} + 4 n_{K+1} s_2(n) + 4,
&\text{if\/ }n\equiv15~\text{\em(mod $16$) and $K$ is odd},
\end{cases}
$$
where $s_2(n),$ $e_2(n),$ and $K$ are defined as in Theorem~{\em\ref{thm:M8}}.
\end{theorem}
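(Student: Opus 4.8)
The plan is to mimic, for the Riordan generating function $R(z)=\sum_{n\ge0}R_n\,z^n$, the three-stage argument carried out for Motzkin numbers in Sections~\ref{sec:Motzkin}--\ref{sec:Mn8}. In the first stage I would establish the Riordan analogue of Theorem~\ref{thm:Motzkin}: that, for every positive integer~$\al$, the series $R(z)$ reduced modulo~$2^{2^\al}$ equals a polynomial in $\Om(z^4)$ of degree $<2^{\al+1}$ whose coefficients lie in $\Z[z,z^{-1},(1-z)^{-1}]$. This follows from the method of Section~\ref{sec:method} applied to the functional equation \eqref{eq:RiordanEF}, and the only point needing a direct check is the base step, where one substitutes $R_1(z)=\frac1z\Om^{2^\al}(z^4)+\sum_{k=0}^{\al+1}z^{2^k-1}/(1-z)^{2^k}$ (the right-hand side of \eqref{eq:Ansatz-MP}) into \eqref{eq:RiordanEF}, eliminates $\Om^{2^{\al+1}}(z^4)$ via \eqref{eq:OmRel2}, and verifies that the result vanishes modulo~$2$. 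The inductive lifting steps then reduce, level by level, to a triangular linear system over $\Z[z,z^{-1},(1-z)^{-1}]$ modulo~$2$ that is uniquely solvable, exactly as in the proof of Theorem~\ref{thm:Motzkin}.

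In the second stage I would run the resulting algorithm with $\al=2$, producing a degree-$7$ polynomial expression for $R(z)$ in $\Om(z^4)$ modulo~$16$, and then simplify it modulo~$8$ by the same three reductions used in Section~\ref{sec:M8}: collapsing high powers of $\Om(z^4)$ via \eqref{eq:N} with $N=3$; lowering the remaining powers via the defining relation \eqref{eq:OmE} of the error series $E$ (together with $E^2(z^4)=E(z^8)$ modulo~$2$); and reducing coefficients modulo~$8$. The output is the representation~\eqref{eq:R}, in which $R(z)$ appears as a $\Z$-linear combination of $E(z^4)\Om(z^4)$, $\Om(z^4)$, $\frac1{1-z^8}\Om(z^4)$, $E(z^4)$, $\frac1{1-z^2}E(z^4)$, $E(z^8)/z$, and a rational function with denominator $1-z^{16}$ --- that is, essentially the building blocks whose coefficients were analysed in Section~\ref{sec:aux}.

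In the third stage, for each of the sixteen residue classes of $n$ modulo~$16$ I would read off from~\eqref{eq:R} which terms actually contribute to $\coef{z^n}R(z)$ and apply the appropriate cases of Lemmas~\ref{lem:E}--\ref{lem:OmE} (with arguments $z^4$ or $z^8$ in place of $z$ as dictated by the term in question), plus the elementary handling of the purely rational summands. Because $E(z^4)\Om(z^4)$ and $\frac1{1-z^8}\Om(z^4)$ are obtained from the Section~\ref{sec:aux} lemmas with argument $z^4$, one must select the relevant case of Lemmas~\ref{lem:OmE} and~\ref{lem:Omz^2} according to whether $n/4$ has the shape $4^e(\text{odd})$ or $2\cdot4^e(\text{odd})$, which is why the answer depends so strongly on $n$ modulo~$16$. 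Adding the digit-expressions thus obtained and simplifying modulo~$8$ gives the sixteen cases; the final cosmetic step is to repackage the resulting sums of products of binary digits into $s_2(n)$, $s_2^2(n)$ and $e_2(n)$ using $2\bigl(\sum_i n_i\bigr)^2\equiv 2s_2(n)+4\sum_{i<j}n_in_j$ and the identity expressing $\sum_i n_{2i}n_{2i+1}+\sum_i n_{2i+1}n_{2i+2}$ as $e_2(n)$, exactly as in the proof of Theorem~\ref{thm:M8}.

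The main obstacle will be the two cases $n\equiv15$~(mod~$16$), which is precisely why the parameter $K$ (the least index $\ge4$ with $n_K=0$) and the digit $n_{K+1}$ enter the statement. As in the $n\equiv15$ case of Theorem~\ref{thm:M8}, the sums produced by Lemmas~\ref{lem:E1-z} and~\ref{lem:OmE} only partially cancel modulo~$8$, and one must carefully bookkeep the leftover pieces: the single surviving $e=k+1$ term, the correction from replacing the auxiliary digits $n'_j$ of $(n+1)2^{-2k}-4$ by the genuine digits $n_j$ (which forces a subcase on the position of the next $1$ above $K$), and the boundary corrections incurred when assembling $e_2(n)$. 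Isolating the forced initial block of digits --- a run of $1$'s followed by the $0$ at position~$K$ --- is what produces the dichotomy between $K$ even and $K$ odd, and getting the constant terms together with the coefficients of $n_{K+1}$ and $n_{K+1}s_2(n)$ exactly right is the most delicate part of the computation; the other fourteen cases are comparatively routine once~\eqref{eq:R} is in hand.
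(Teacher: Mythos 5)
Your proposal follows exactly the paper's route: it sets up the Riordan analogue of Theorem~\ref{thm:Motzkin} via the method of Section~\ref{sec:method} with the base-step Ansatz \eqref{eq:Ansatz-MP}, runs the algorithm with $\al=2$, reduces modulo~$8$ with \eqref{eq:N} and the error series $E$ to reach \eqref{eq:R}, and then extracts coefficients case by case with Lemmas~\ref{lem:E}--\ref{lem:OmE}, treating $n\equiv15$~(mod~$16$) with the parameter $K$ as in Theorem~\ref{thm:M8}. This is essentially the same argument (sketched at the same level of detail) as the paper's own proof, and it is correct.
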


\subsection{Hex tree numbers modulo $8$}

Let $H_n$ be the $n$-th {\it hex tree number}, that is, 
the number of planar rooted trees where each vertex may have a left, a
middle, or a right descendant, but never a left {\it and\/} middle
descendant, and never a middle {\it and\/} right descendant.
It is well-known (and easy to see)
that the generating function $H(z)=\sum_{n\ge0}H_n\,z^n$
satisfies the functional equation
\begin{equation} \label{eq:hex treeEF}
z^2H^2(z)+(3z-1)H(z)+1=0.
\end{equation}

When we apply our method from Section~\ref{sec:method},
the Ansatz for the base step is
\begin{equation*} 
H_1(z)=\frac {1-z} {z^2}\Om^{2^\al}(z^4)+\sum_{k=1}^{\al+1}\frac {z^{2^k-2}}
{(1-z)^{2^k-1}}. 
\end{equation*}
Subsequently, everything runs through in the same way as in
the proof of Theorem~\ref{thm:Motzkin}. Consequently, 
the generating function $H(z)$ for hex tree numbers 
satisfies a completely analogous theorem.

If we now follow the line of argument in Section~\ref{sec:M8},
then we obtain that $H(z)$ admits the following representation
modulo~$8$:
\begin{multline} \label{eq:H}
H(z)=
\frac{4 E(z^4)\Om(z^4)
   (z+1)}{z^2}\\
+ \left(\frac{7}{z^2}+\frac{3}{z}+\frac{6 z^7+2
   z^6+2 z^5+6 z^4+2 z^3+6 z^2+6
   z+2}{1-z^8}\right)\Om(z^4)\\
+\frac{4
   E(z^8) (z+1)}{z^2}
+
   \left(\frac{2}{z^2}+\frac{2}{z}+\frac{4}
   {1-z}\right)E(z^4)\\
+\frac{\begin{matrix}3 z^{15}+5 z^{14}+5
   z^{13}+3 z^{12}+z^{11}+3 z^{10}+7 z^9+5
   z^8\kern1cm\\\kern2.5cm +3 z^7+5 z^6+z^5+7 z^4+z^3+3 z^2+3
   z+1\end{matrix}}{1-z^{16}}
\quad \quad \text{modulo }8.
\end{multline}

Coefficient extraction using Lemmas~\ref{lem:E}--\ref{lem:OmE}
leads to the following theorem generalising \cite[Cor.~3.4]{DeSaAA}.

\begin{theorem} \label{thm:H8}
Let $n$ be a positive integer with binary expansion
$$
n=n_0+n_1\cdot2+n_2\cdot2^2+\cdots.
$$
The hex tree numbers $H_n$ satisfy the following congruences
modulo~$8${\em :}
$$H_n\equiv_8 \begin{cases} 
2 s_2^2(n) + 4 e_2(n) + 1,&\text{if\/ }n\equiv0~\text{\em(mod $16$),}\\
2 s_2^2(n) + 4 e_2(n) + 1,
&\text{if\/ }n\equiv1~\text{\em(mod $16$),}\\
4 s_2(n) + 6,&\text{if\/ }n\equiv2~\text{\em(mod $16$),}\\
4,&\text{if\/ }n\equiv3~\text{\em(mod $16$),}\\
6 s_2^2(n) + 4 e_2(n) + 3,&\text{if\/ }n\equiv4~\text{\em(mod $16$),}\\
6 s_2^2(n) + 4 e_2(n) + 7,&\text{if\/ }n\equiv5~\text{\em(mod $16$),}\\
6 s_2^2(n) + 4 e_2(n) + 2 n_4 + 4 n_4 s_2(n) + 7,
&\text{if\/ }n\equiv6~\text{\em(mod $16$),}\\
6 s_2^2(n) + 4 e_2(n) + 6 n_4 + 4 n_4\, s_2(n) + 7,
&\text{if\/ }n\equiv7~\text{\em(mod $16$),}\\
2 s_2^2(n) + 4 e_2(n) + 1,&\text{if\/ }n\equiv8~\text{\em(mod $16$),}\\
2 s_2^2(n) + 4 e_2(n) + 1,
&\text{if\/ }n\equiv9~\text{\em(mod $16$),}\\
4,&\text{if\/ }n\equiv10~\text{\em(mod $16$),}\\
4 s_2(n) + 6,&\text{if\/ }n\equiv11~\text{\em(mod $16$),}\\
6 s_2^2(n) + 4 e_2(n) + 3,
&\text{if\/ }n\equiv12~\text{\em(mod $16$),}\\
6 s_2^2(n) + 4 e_2(n) + 7,&\text{if\/ }n\equiv13~\text{\em(mod $16$),}\\
   4 s_2(n) + 6 n_{K+1} + 4 n_{K+1} s_2(n) + 6, 
&\text{if\/ }n\equiv14~\text{\em(mod $16$) and $K$ is even,}\\
6 s_2^2(n) + 4 e_2(n) + 2 n_{K+1} + 4 n_{K+1} s_2(n) + 7,\hskip-3cm\\
&\text{if\/ }n\equiv14~\text{\em(mod $16$) and $K$ is odd,}\\
2 n_{K+1} + 4 n_{K+1} s_2(n) + 4,
&\text{if\/ }n\equiv15~\text{\em(mod $16$) and $K$ is even,}\\ 
6 s_2^2(n) + 4 e_2(n) + 6 n_{K+1} + 4 n_{K+1} s_2(n) + 7,\hskip-1cm\\
&\text{if\/ }n\equiv15~\text{\em(mod $16$) and $K$ is odd},
\end{cases}
$$
where $s_2(n),$ $e_2(n),$ and $K$ are defined as in Theorem~{\em\ref{thm:M8}}.
\end{theorem}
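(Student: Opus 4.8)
The plan is to follow the argument of Sections~\ref{sec:M8}--\ref{sec:Mn8} step for step, with the obvious changes, starting now from the mod-$8$ representation \eqref{eq:H} of $H(z)$ in place of \eqref{eq:MM}. First I would observe that the right-hand side of \eqref{eq:H} is assembled from exactly the atomic series whose coefficient extraction was carried out in Section~\ref{sec:aux}: the product $E(z^4)\,\Om(z^4)$ (premultiplied by the Laurent polynomial $4(z+1)/z^2$), the series $\Om(z^4)$ and $\frac1{1-z^8}\Om(z^4)$ (the former being a difference of two shifts of the latter), the series $E(z^4)$ and $\frac1{1-z}E(z^4)$, the power $E(z^8)=E^2(z^4)$ modulo~$2$, and a purely rational tail $P(z)/(1-z^{16})$ with $P$ a polynomial of degree~$15$, whose coefficient of $z^n$ depends only on $n$ modulo~$16$ and on the parity of~$n$. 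Coefficient extraction for all the non-trivial pieces is then governed by Lemmas~\ref{lem:E}, \ref{lem:E1-z}, \ref{lem:Omz^2} and~\ref{lem:OmE}, with $z$ replaced by $z^4$ (respectively $z^8$).

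Next I would split into the $16$ residue classes of $n$ modulo~$16$. For each class one first decides, for every atomic series, which case of the relevant lemma applies; this is dictated by the $2$-adic shape of the argument --- whether, after cancelling the appropriate power of~$4$, one faces an odd integer or twice an odd integer --- and, in the classes $n\equiv14$ and $n\equiv15$ modulo~$16$, additionally by the parity of $K$, the least index $\ge4$ with $n_K=0$, exactly as in the proof of Theorem~\ref{thm:M8}. Summing the resulting contributions (together with the tail) produces, in each of the $16$ cases, a raw expression entirely analogous to \eqref{eq:M15} or \eqref{eq:M15B}.

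The substantive part is the simplification already performed in Section~\ref{sec:Mn8}: the ``double sums'' $\sum_{e}(n_{2e+2}+n_{2e+3})(n_2+\dots+n_{2e+1})$ and $\sum_i n_{2i+1}(n_{2i+2}+1)$ coming out of Lemmas~\ref{lem:E1-z} and~\ref{lem:OmE} have to be converted into the symmetric quantities $s_2(n)$, $s_2^2(n)$ and $e_2(n)$ of the statement, using $2\bigl(\sum_i n_i\bigr)^2\equiv 2\sum_i n_i+4\sum_{i<j}n_i n_j\pmod 8$ together with the fact that $4\sum_{i\ge1}n_{2i+1}n_{2i+2}+4\sum_{i\ge1}n_{2i}n_{2i+1}$ recombines into $4e_2(n)$ up to an explicitly computable error modulo~$8$. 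One must also absorb the auxiliary ``primed'' digits produced by Lemmas~\ref{lem:E1-z} and~\ref{lem:OmE} --- those coming from $n-4$, respectively from $n4^{-e}-4$ or $(n+1)2^{-2k}-4$ --- which, on checking separately the two possibilities for the next relevant digit, may be dropped without changing the answer modulo~$8$. Finally, since fixing the residue of $n$ modulo~$16$ pins down the low digits $n_0,\dots,n_3$ (and, in the $K$-dependent cases, the digits up to $n_K$), all the remaining $4\cdot(\text{const})$ and $2\cdot(\text{const})$ terms become explicit; each case then collapses to the asserted closed form. As a consistency check, reducing the result modulo~$2$ must recover \cite[Cor.~3.4]{DeSaAA}.

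I expect the main obstacle to be the same as for Theorem~\ref{thm:M8}: the bookkeeping of the partial cancellations modulo~$8$ in the two hardest cases $n\equiv14$ and $n\equiv15$ modulo~$16$, each split further according to whether $K$ is even or odd. When $K$ is odd, the double sum from Lemma~\ref{lem:OmE} has no partner sum to cancel against (contrast the situation in \eqref{eq:M15}), so it must be rewritten from scratch via the $s_2^2$-identity above --- this is what forces the extra $2\,s_2^2(n)$, $4\,n_{K+1}s_2(n)$ and $6\,n_{K+1}$ terms --- and keeping track of the many constant-times-$4$ corrections and of the contribution of the finitely many pinned-down low digits is where all the care is needed. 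Nothing here is conceptually new beyond Section~\ref{sec:Mn8}: it is the identical $16$-fold case analysis executed with the coefficients of \eqref{eq:H} instead of those of \eqref{eq:MM}, and it yields Theorem~\ref{thm:H8}.
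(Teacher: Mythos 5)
Your proposal matches the paper's own treatment: the paper likewise derives the mod-$8$ representation \eqref{eq:H} by the method of Section~\ref{sec:method} and then simply invokes coefficient extraction via Lemmas~\ref{lem:E}--\ref{lem:OmE}, carried out case by case over the residues of $n$ modulo~$16$ (with the extra split on the parity of $K$) exactly as in the proof of Theorem~\ref{thm:M8}, including the recombination of the double sums into $s_2(n)$, $s_2^2(n)$, $e_2(n)$ and the dropping of the primed digits. So the proposal is correct and takes essentially the same route as the paper.
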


Inspection of the above sixteen cases reveals the interesting
fact that hex tree numbers are never divisible by~$8$
(a fact that is also true for Motzkin numbers).

\subsection{Central trinomial coefficients modulo $8$}

Let $T_n$ be the $n$-th {\it central trinomial coefficient}, that is, the
coefficient of $t^n$ in $(1+t+t^2)^n$.
Already Euler knew 
that the generating function $T(z)=\sum_{n\ge0}T_n\,z^n$
equals 
\begin{equation} \label{eq:zentrin} 
T(z)=(1-2z-3z^2)^{-1/2}
\end{equation}
 (cf.\ \cite[solution to 
Exercise~6.42]{StanBI}). 

Here we proceed differently in order to express $T(z)$ in terms of our
basic series $\Om(z^4)$ (and the error series $E(z^4)$).
We are forced to do so since our method from
Section~\ref{sec:method} fails.\footnote{The base step solution
would be the one with $a_0(z)=1/(1-z)$ and $a_{2^\al}(z)=0$.
Subsequently, the iteration step modulo~$4$ would not succeed,
however. We believe that the reason for this phenomenon might be that
the minimal polynomial for $\Om$ (in the sense of
\cite{KaKMAA,KrMuAE,KrMuAL}) is of degree less than~$4$.}
Instead, we compare \eqref{eq:Motzkin} and
\eqref{eq:zentrin} to see that\footnote{A similar approach would also
have been possible for the Motzkin 
prefix and the Riordan numbers since their generating functions
satisfy relations with the generating function $M(z)$ for Motzkin
numbers analogous to \eqref{eq:TM}. However, no such relation 
exists between the generating function for hex tree numbers
and $M(z)$.}
\begin{equation} \label{eq:TM} 
T(z)=\frac {1-z-2z^2M(z)} {1-2z-3z^3}.
\end{equation}
Relation \eqref{eq:TM} would allow us to establish a complete analogue
of Theorem~\ref{thm:Motzkin} for the generating
function $T(z)$ for central trinomial coefficients.
In order to determine these numbers modulo~$8$,
we substitute \eqref{eq:MM} in the above relation. Then
some simplification eventually leads to
\begin{multline} \label{eq:T}
T(z)=
\frac{6 z^3+6 z^2+2 z+2
   }{1-z^4}\Om(z^4)+\frac{4 E(z^4)}{1-z}\\
+\frac{3
   z^7+7 z^6+z^5+z^4+7 z^3+3
   z^2+z+1}{1-z^8}
\quad \quad \text{modulo }8.
\end{multline}

Coefficient extraction using Lemmas~\ref{lem:E1-z} and \ref{lem:Omz^2}
leads to the following theorem for central trinomial coefficients modulo~$8$.

\begin{theorem} \label{thm:T8}
Let $n$ be a positive integer with binary expansion
$$
n=n_0+n_1\cdot2+n_2\cdot2^2+\cdots.
$$
Modulo~$8,$ the central trinomial coefficients $T_n$ 
satisfy the following congruences:
$$T_n\equiv_8 \begin{cases} 
2 s_2^2(n) + 4 e_2(n) + 1,
&\text{if\/ }n\equiv0~\text{\em(mod $2$),}\\
6 s_2^2(n) + 4 e_2(n) + 3,
&\text{if\/ }n\equiv1~\text{\em(mod $2$),}\\
\end{cases}
$$
where $s_2(n)$ and $e_2(n)$ are defined as in Theorem~{\em\ref{thm:M8}}.
\end{theorem}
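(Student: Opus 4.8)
The plan is to read off the coefficient of $z^n$ from the three summands on the right-hand side of \eqref{eq:T}, namely
$$
A(z)=\frac{6z^3+6z^2+2z+2}{1-z^4}\,\Om(z^4),\quad
B(z)=\frac{4\,E(z^4)}{1-z},\quad
C(z)=\frac{3z^7+7z^6+z^5+z^4+7z^3+3z^2+z+1}{1-z^8},
$$
and then to simplify the sum of the three contributions into the claimed closed form. For $C(z)$ there is nothing to do: its coefficient sequence is purely periodic with period~$8$, so $\coef{z^n}C(z)$ modulo~$8$ is simply the entry of the numerator indexed by $n$ modulo~$8$. For $B(z)$, since the series carries the factor~$4$, only $\coef{z^n}\frac{E(z^4)}{1-z}$ modulo~$2$ is required, and this is exactly the content of Lemma~\ref{lem:E1-z}.

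For $A(z)$ I would use that $\frac{\Om(z^4)}{1-z^4}$ is a power series in~$z^4$: writing $n=4m+r$ with $r\in\{0,1,2,3\}$, only the coefficient $c_r$ of $z^r$ in the numerator contributes, where $(c_0,c_1,c_2,c_3)=(2,2,6,6)$, so that $\coef{z^n}A(z)\equiv c_r\cdot\coef{w^m}\frac{\Om(w)}{1-w}\pmod 8$. Since each $c_r$ is $\equiv2\pmod4$, it suffices to evaluate $\coef{w^m}\frac{\Om(w)}{1-w}$ modulo~$4$, and from $\frac{1}{1-w}=\frac{1+w}{1-w^2}$ this equals $\coef{w^m}\frac{\Om(w)}{1-w^2}+\coef{w^{m-1}}\frac{\Om(w)}{1-w^2}$, both of which Lemma~\ref{lem:Omz^2} supplies explicitly modulo~$4$ in terms of binary digits. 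One small bookkeeping point: Lemma~\ref{lem:E1-z} returns, besides the binary digits $n_j$ of~$n$, the digits $n'_j$ of $n-4$, but only at positions $\ge4$; a short inspection of how the borrow in $n-4$ propagates --- together with the fact that the $n'_j$ enter weighted by partial digit sums that vanish exactly where $n'_j$ and $n_j$ differ --- shows that modulo~$2$ the $n'_j$ may simply be replaced by $n_j$, more cheaply here than in the analogous step of the proof of Theorem~\ref{thm:M8}.

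It then remains to add the three contributions and simplify. After collecting terms, the case distinction collapses to the parity of~$n$: the dependence of $C(z)$ on $n$ modulo~$8$ and of the prefactors $c_r$ on $n$ modulo~$4$ is absorbed by the $\Om$- and $E$-contributions. The digit manipulations are the standard ones --- double sums $\sum_{i>j}n_in_j$ collapse to $\binom{s_2(n)}{2}\equiv\frac{1}{2}\bigl(s_2^2(n)-s_2(n)\bigr)$, sums of the form $\sum_i(n_{2i+1}+1)n_{2i+2}$ and their companions combine, after the factor~$4$, into $4\,e_2(n)$ up to an explicitly computable trailing-digit correction, and the linear remainder assembles into $2s_2^2(n)+1$ for even~$n$ and into $6s_2^2(n)+3$ for odd~$n$; the shift by~$6$ between the two cases reflects that $s_2(n)$ is odd for odd~$n$, so that $4s_2^2(n)\equiv4\pmod8$. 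This gives the two asserted congruences.

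The main obstacle is, as for Theorem~\ref{thm:M8}, this final simplification: turning the raw sums over binary digits into the compact forms $2s_2^2(n)+4e_2(n)+1$ and $6s_2^2(n)+4e_2(n)+3$ is routine but requires care with the trailing-one corrections. Everything feeding into the coefficient extraction is already available (Lemmas~\ref{lem:E1-z} and~\ref{lem:Omz^2}), and the fact that \eqref{eq:T} is only linear in $\Om(z^4)$ and in $E(z^4)$ --- with no $E(z^8)$, no $\Om^2$, nor any higher power present --- means there is no case explosion and the answer collapses to just the two residue classes of~$n$ modulo~$2$.
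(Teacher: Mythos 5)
Your plan coincides with the paper's own (its entire proof of Theorem~\ref{thm:T8} is the remark that coefficient extraction from \eqref{eq:T} via Lemmas~\ref{lem:E1-z} and~\ref{lem:Omz^2} yields the result): splitting \eqref{eq:T} into the three summands, needing only the mod-$2$ value of $\coef{z^n}E(z^4)/(1-z)$ because of the factor~$4$, reducing the $\Om$-term to Lemma~\ref{lem:Omz^2} modulo~$4$ via $1/(1-w)=(1+w)/(1-w^2)$, and dropping the primes on the digits of $n-4$ because the accompanying partial digit sums vanish exactly where borrow changes digits --- all of this is sound and is what the paper intends. The only inaccuracy is the parenthetical explanation of the even/odd shift: $s_2(n)$ is not necessarily odd for odd $n$ (e.g.\ $n=3$), and the actual discrepancy $4s_2^2(n)+2\equiv 4s_2(n)+2 \pmod 8$ between the two displayed cases has to come out of the deferred digit bookkeeping rather than from that parity claim.
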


\section*{Note}
The result in Theorem~\ref{thm:M8} has in the meantime also been obtained
--- in slightly different form and using a different approach --- by Wang and Xin
in ``A Classification of Motzkin Numbers Modulo~$8$" [{Electron\@. J. Combin\@.}
{\bf 25}(1) (2018), Article~P1.54, 15~pp.].

\section*{Acknowledgement}
The authors are indebted to an anonymous referee for clarifying
the scope of the automaton method of Rowland and Yassawi in \cite{RoYaAA}.

\end{document}